%
%
%
%
%
%
%

\documentclass[11pt,twoside]{amsart}
\usepackage{latexsym,amssymb,amsmath}

\textwidth=16.00cm
\textheight=22.00cm
\topmargin=0.00cm
\oddsidemargin=0.00cm 
\evensidemargin=0.00cm
\headheight=0cm
\headsep=1cm
\headsep=0.5cm 
\numberwithin{equation}{section}
\hyphenation{semi-stable}
\setlength{\parskip}{3pt}

\newtheorem{theorem}{Theorem}[section]
\newtheorem{lemma}[theorem]{Lemma}
\newtheorem{proposition}[theorem]{Proposition}
\newtheorem{corollary}[theorem]{Corollary}

\theoremstyle{definition}
\newtheorem{definition}[theorem]{Definition}

\begin{document}


\title[The minimum distance of parameterized codes on projective tori]
{The minimum distance of parameterized codes on projective tori} 

\author{Eliseo Sarmiento}
\address{
Departamento de
Matem\'aticas\\
Centro de Investigaci\'on y de Estudios
Avanzados del
IPN\\
Apartado Postal
14--740 \\
07000 Mexico City, D.F.
}
\email{esarmiento@math.cinvestav.mx}

\thanks{The first author was partially supported by CONACyT. The
second author is a member of the Center for Mathematical Analysis,
Geometry and Dynamical Systems. The third author was partially supported by CONACyT 
grant 49251-F and SNI}

\author{Maria Vaz Pinto}
\address{
Departamento de Matem\'atica\\
Instituto Superior Tecnico\\
Universidade T\'ecnica de Lisboa\\ 
Avenida Rovisco Pais, 1\\ 
1049-001 Lisboa, Portugal 
}\email{ vazpinto@math.ist.utl.pt}

\author{Rafael H. Villarreal}
\address{
Departamento de
Matem\'aticas\\
Centro de Investigaci\'on y de Estudios
Avanzados del
IPN\\
Apartado Postal
14--740 \\
07000 Mexico City, D.F.
}
\email{vila@math.cinvestav.mx}

\subjclass[2000]{Primary 13P25; Secondary 14G50, 14G15, 11T71, 94B27, 94B05.} 

\begin{abstract} Let $X$ be a subset of
a projective space, over a finite field 
$K$, which is parameterized by the monomials arising 
from the edges of a clutter.  Let $I(X)$ be the vanishing ideal of $X$. 
It is shown that $I(X)$ is a complete 
intersection if and only if $X$ is a projective torus. In
this case we determine the minimum distance of any parameterized
linear code arising from $X$. 
\end{abstract}

\maketitle

\section{Introduction}

Let $K=\mathbb{F}_q$  be a finite field with $q$ elements and 
let $y^{v_1},\ldots,y^{v_s}$ be a finite set of monomials.  
As usual if $v_i=(v_{i1},\ldots,v_{in})\in\mathbb{N}^n$, 
then we set 
$$
y^{v_i}=y_1^{v_{i1}}\cdots y_n^{v_{in}},\ \ \ \ i=1,\ldots,s,
$$
where $y_1,\ldots,y_n$ are the indeterminates of a ring of 
polynomials with coefficients in $K$. Consider the following set
parameterized  by these monomials 
$$
X:=\{[(x_1^{v_{11}}\cdots x_n^{v_{1n}},\ldots,x_1^{v_{s1}}\cdots
x_n^{v_{sn}})]\in\mathbb{P}^{s-1}	\vert\, x_i\in K^*\mbox{ for all }i\},
$$
where $K^*=K\setminus\{0\}$ and $\mathbb{P}^{s-1}$ is a projective
space over the field $K$. Following \cite{afinetv} we call $X$ an  
{\it algebraic toric set\/} parameterized  by 
$y^{v_1},\ldots,y^{v_s}$. The set $X$ is a multiplicative group under
componentwise multiplication.

Let
$S=K[t_1,\ldots,t_s]=\oplus_{d=0}^\infty S_d$ 
be a polynomial ring 
over the field $K$ with the standard grading, let $[P_1],\ldots,[P_m]$
be the points of $X$, and 
let $f_0(t_1,\ldots,t_s)=t_1^d$. The {\it 
evaluation map\/} 
\begin{equation}\label{ev-map}
{\rm ev}_d\colon S_d=K[t_1,\ldots,t_s]_d\rightarrow K^{|X|},\ \ \ \ \ 
f\mapsto \left(\frac{f(P_1)}{f_0(P_1)},\ldots,\frac{f(P_m)}{f_0(P_m)}\right)
\end{equation}
defines a linear map of
$K$-vector spaces. This map is well defined, i.e., it is independent
of the choice of representatives $P_1,\ldots,P_m$. The image of ${\rm ev}_d$, denoted by $C_X(d)$,
defines a {\it linear code}. Following \cite{algcodes} we call
$C_X(d)$ a {\it parameterized code\/} of
order $d$. As usual by a {\it linear code\/} we mean a linear subspace of
$K^{|X|}$. 

The definition of $C_X(d)$ can be extended to any finite subset
$X\subset\mathbb{P}^{s-1}$ of a projective space over a field $K$.
Indeed if we choose a degree $d\geq 1$, for each $i$ there is $f_i\in S_d$ such
that $f_i(P_i)\neq 0$ and we can define $C_X(d)$ as
the image of the evaluation map given by 
\begin{equation*}
{\rm ev}_d\colon S_d=K[t_1,\ldots,t_s]_d\rightarrow K^{|X|},\ \ \ \ \ 
f\mapsto
\left(\frac{f(P_1)}{f_1(P_1)},\ldots,\frac{f(P_m)}{f_m(P_m)}\right).
\end{equation*}
In this
generality---the resulting linear code---$C_X(d)$ is called an {\it
evaluation code\/} associated to $X$ \cite{gold-little-schenck}. 
It is also called  a 
{\it projective Reed-Muller code\/} over the set $X$
\cite{duursma-renteria-tapia,GRT}.  
Some families of evaluation codes---including several 
variations of Reed-Muller codes---have been studied extensively using
commutative algebra methods (e.g., Hilbert functions, resolutions, Gr\"obner
bases), see \cite{delsarte-goethals-macwilliams,
duursma-renteria-tapia,gold-little-schenck,GR,GRT,
algcodes,renteria-tapia-ca,renteria-tapia-ca2,sorensen}. In this
paper we use these methods to study parameterized codes over 
finite fields. There are some other papers that
have studied evaluation codes from the commutative algebra
perspective \cite{ballico-fontanari,hansen,tohaneanu}.

The {\it dimension\/} and the {\it length\/} of $C_X(d)$ 
are given by $\dim_K C_X(d)$ and $|X|$ respectively. The dimension
and the length 
are two of the {\it basic parameters} of a linear code. A third
basic parameter is the {\it minimum
distance\/} which is given by 
$$\delta_d=\min\{\|v\|
\colon 0\neq v\in C_X(d)\},$$ 
where $\|v\|$ is the number of non-zero
entries of $v$. The basic parameters of $C_X(d)$ are related by the
{\it Singleton bound\/} for the minimum distance:
$$
\delta_d\leq |X|-\dim_KC_X(d)+1.
$$

The parameters of evaluation codes over finite fields have been computed in a number
of cases. If $X=\mathbb{P}^{s-1}$,  
the parameters of $C_X(d)$ are described in
\cite[Theorem~1]{sorensen}. If $X$ is the image of the affine space
$\mathbb{A}^{s-1}$ under the map $\mathbb{A}^{s-1}\rightarrow
\mathbb{P}^{s-1} $, $x\mapsto [(1,x)]$, the parameters 
of $C_X(d)$ are described in
\cite[Theorem~2.6.2]{delsarte-goethals-macwilliams}. Lower bounds for
the minimum distance 
of evaluation codes have been
shown when $X$ is any complete intersection reduced set of points in
a projective space
\cite{ballico-fontanari,gold-little-schenck,hansen}, and when $X$ is a
reduced Gorenstein set of points \cite{tohaneanu}. Upper bounds for
the minimum distance of certain parameterized codes are given in
\cite{algcodes,d-compl}. In this paper 
we examine the case when $X$ is an algebraic toric set parameterized
by $y_1,\ldots,y_s$. 

The contents of this paper are as follows. In
Section~\ref{prelim-invariants-of-I} we introduce the preliminaries 
and explain the well known connection---via Hilbert
functions---between the invariants of the vanishing 
ideal of $X$ and the parameters of $C_X(d)$, all the 
results of this section are well known. In
Section~\ref{minimum-distance-section} we recall a classical and well
known upper bound for the number of roots of a non-zero polynomial in
$S$ (see Lemmas~\ref{giusti-remark} and \ref{vila-vaz-pinto}). Then, we
show upper bounds for the  
number of roots, over an affine torus, for a certain family of
polynomials in $S$ (see Theorem~\ref{maria-vila-bound}). The
main theorem of 
Section~\ref{minimum-distance-section} is a formula for
the minimum distance of $C_X(d)$, 
where 
$$X=\{[(x_1,\ldots,x_s)]\in\mathbb{P}^{s-1}\vert\, x_i\in
K^*\mbox{ for all
}i\}$$ 
is a {\it projective torus\/} in $\mathbb{P}^{s-1}$ (see
Theorem~\ref{maria-vila-hiram-eliseo}). Evaluation codes associated
to a projective torus are called {\it generalized projective 
Reed-Solomon\/} codes \cite{GRH}. If $X$ is a projective torus in
$\mathbb{P}^1$ or $\mathbb{P}^2$, we recover some formulas of
\cite{GRH,algcodes} for the minimum distance of $C_X(d)$ 
(see Proposition~\ref{minimum-distance-p1-p2}).  

Let $X$ be an algebraic toric set parameterized by
$y^{v_1},\ldots,y^{v_s}$.  
The {\it vanishing ideal\/} of $X$, denoted by $I(X)$, is the
ideal of $S$ generated by the homogeneous polynomials of $S$ that
vanish on $X$. The ideal $I(X)$ is called a {\it complete
intersection\/} if it can be generated by $s-1$ homogeneous
polynomials of $S$. In what follows we assume that $v_1,\ldots,v_s$ are 
the characteristic vectors of
the edges of a clutter (a special sort of hypergraph, see
Definition~\ref{clutter-def}). In 
Section~\ref{ci-section} we are able
to classify when $I(X)$ is a complete intersection 
(see Theorem~\ref{ci->canonical-form} and
Corollary~\ref{ci-characterization}). The main
algebraic fact about $I(X)$ that we need for this classification is a
remarkable result of \cite{algcodes} showing that $I(X)$ is a
binomial ideal. 

The complete intersection property of $I(X)$ has also
been studied in \cite{d-compl}, but from a linear algebra
perspective. 
Let $\phi\colon
\mathbb{Z}^n/L\rightarrow\mathbb{Z}^n/L$ be the multiplication map 
$\phi(\overline{a})=(q-1)\overline{a}$, where $L$ is the subgroup
generated by $\{v_i-v_1\}_{i=2}^s$. In \cite{d-compl} it is shown
that if the clutter is uniform, i.e., all its edges have the same
cardinality, and $q\geq 3$, then $I(X)$ is a
complete intersection if and only if $v_1,\ldots,v_s$ are linearly independent
and the map $\phi$ is injective.     

We show an optimal upper bound for the
regularity of $I(X)$ in terms of the regularity of a complete
intersection (see Proposition~\ref{main-regularity-ff}). This shows
that the complete intersection $I(X)$ from clutters have the largest
possible regularity. 

The ideal $I(X)$ is studied in \cite{d-compl} from the
viewpoint of computational commutative algebra. The degree-complexity
and the reduced 
Gr\"obner basis of $I(X)$, with respect to the reverse
lexicographical order, is examined in \cite[Theorem~4.1]{d-compl}. 

For all unexplained 
terminology and additional information  we refer to
\cite{EisStu} (for the theory of binomial ideals),
\cite{AL,Sta1} (for the theory of polynomial ideals and Hilbert
functions), 
\cite{MacWilliams-Sloane,stichtenoth,tsfasman} (for the theory of 
error-correcting codes and algebraic geometric codes), and
\cite{algcodes} (for the theory of parameterized codes). 

\section{Preliminaries: Hilbert functions and the basic parameters of
codes}\label{prelim-invariants-of-I} 

We continue to use the notation and definitions used in the
introduction. In this section we introduce the basic algebraic
invariants of $S/I(X)$, via Hilbert functions, and we recall their
well known connection with the basic 
parameters of parameterized linear codes. Then, we present some of the
results that will be needed later. 

Recall that the {\it projective space\/} of 
dimension $s-1$ over $K$, denoted by 
$\mathbb{P}^{s-1}$, is the quotient space 
$$(K^{s}\setminus\{0\})/\sim $$
where two points $\alpha$, $\beta$ in $K^{s}\setminus\{0\}$ 
are equivalent if $\alpha=\lambda{\beta}$ for some $\lambda\in K^*$. We
denote the  
equivalence class of $\alpha$ by $[\alpha]$. Let
$X\subset\mathbb{P}^{s-1}$ be an algebraic toric set
parameterized by $y^{v_1},\ldots,y^{v_s}$ and let $C_X(d)$ be a
parameterized code of order $d$. The kernel of the
evaluation map ${\rm ev}_d$, defined in 
Eq.~(\ref{ev-map}), is precisely $I(X)_d$ the degree
$d$ piece of $I(X)$. Therefore there is an isomorphism of $K$-vector spaces
$$S_d/I(X)_d\simeq C_X(d).$$

It is well known that two of the basic parameters of $C_X(d)$ can be expressed 
using Hilbert functions of standard graded algebras
\cite{duursma-renteria-tapia,GRT,algcodes,sorensen}, as we
now  explain. Recall that the
{\it Hilbert function\/} of
$S/I(X)$ is given by 
$$H_X(d):=\dim_K\, 
(S/I(X))_d=\dim_K\, 
S_d/I(X)_d=\dim_KC_X(d).$$
The unique polynomial $h_X(t)=\sum_{i=0}^{k-1}c_it^i\in
\mathbb{Z}[t]$ of degree $k-1=\dim(S/I(X))-1$ such that $h_X(d)=H_X(d)$ for
$d\gg 0$ is called the {\it Hilbert polynomial\/} of $S/I(X)$. The
integer $c_{k-1}(k-1)!$, denoted by ${\rm deg}(S/I(X))$, is 
called the {\it degree\/} or  {\it multiplicity} of $S/I(X)$. In our
situation 
$h_X(t)$ is a non-zero constant because $S/I(X)$ has dimension $1$. 
Furthermore $h_X(d)=|X|$ for $d\geq |X|-1$, see \cite[Lecture
13]{harris}. This means that $|X|$ is the {\it degree\/} 
of $S/I(X)$. Thus, $H_X(d)$ and ${\rm deg}(S/I(X))$ are the
dimension and the length of $C_X(d)$ respectively. 

There are algebraic
methods, based on elimination theory and Gr\"obner bases, to compute
the dimension and the length of $C_X(d)$ \cite{algcodes}. This is one
of the reasons that make some of the basic parameters of parameterized codes
more tractable. However, in general, the problem of computing the minimum
distance of a linear code is difficult because it is NP-hard
\cite{vardy-complexity}.  

The {\it index of regularity\/} of $S/I(X)$, denoted by 
${\rm reg}(S/I(X))$, is the least integer $p\geq 0$ such that
$h_X(d)=H_X(d)$ for $d\geq p$. The degree and the regularity index can be
read off the Hilbert series as we now explain. The Hilbert series of
$S/I(X)$ can be 
written as
$$
F_X(t):=\sum_{i=0}^{\infty}H_X(i)t^i=\sum_{i=0}^{\infty}\dim_K(S/I(X))_it^i=
\frac{h_0+h_1t+\cdots+h_rt^r}{1-t},
$$
where $h_0,\ldots,h_r$ are positive integers. Indeed
$h_i=\dim_K(S/(I(X),t_s))_i$ for $0\leq i\leq r$ and
$\dim_K(S/(I(X),t_s))_i=0$ for $i>r$.  
This follows from the fact that $I(X)$ is a Cohen-Macaulay lattice
ideal of height $s-1$ \cite{algcodes}, and by observing that
$\{t_s\}$ is a regular system of 
parameters for $S/I(X)$ (see \cite{Sta1}). The number $r$ is the
regularity index of $S/I(X)$ and $h_0+\cdots+h_r$ is the degree of
$S/I(X)$ (see \cite[Corollary~4.1.12]{monalg}). In our situation, ${\rm
reg}(S/I(X))$ is the Castelnuovo-Mumford regularity of $S/I(X)$
\cite{eisenbud-syzygies}. We will refer to ${\rm reg}(S/I(X))$ as the
{\it regularity\/} of $S/I(X)$.  

For convenience we recall the following result on complete
intersections. 

\begin{proposition}{\rm\cite[Theorem~1, Lemma~1]{GRH}}\label{ci-summary} If 
$\mathbb{T}=\{[(x_1,\ldots,x_s)]\in\mathbb{P}^{s-1}\vert\, x_i\in
K^*\mbox{ for all }i\}$ is a projective torus 
in $\mathbb{P}^{s-1}$, then
\begin{itemize}
\item[(a)] $I(\mathbb{T})=(\{t_i^{q-1}-t_1^{q-1}\}_{i=2}^s)$. 
\item[(b)] $ F_\mathbb{T}(t)=(1-t^{q-1})^{s-1}/(1-t)^s$.  
\item[(c)] ${\rm reg}(S/I(\mathbb{T}))=(s-1)(q-2)$ and ${\rm
deg}(S/I(\mathbb{T}))=(q-1)^{s-1}$.
\end{itemize}
\end{proposition}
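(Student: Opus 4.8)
The plan is to work directly with the explicit description of the torus. For part (a), I would first check that each binomial $t_i^{q-1}-t_1^{q-1}$ vanishes on $\mathbb{T}$: for a point $[(x_1,\dots,x_s)]$ with all $x_i\in K^*$ we have $x_i^{q-1}=1$ for every $i$ (the multiplicative group $K^*$ has order $q-1$), so $x_i^{q-1}-x_1^{q-1}=0$. Hence the ideal $J:=(\{t_i^{q-1}-t_1^{q-1}\}_{i=2}^s)$ is contained in $I(\mathbb{T})$. For the reverse inclusion I would argue by comparing Hilbert functions (equivalently degrees): $J$ is generated by $s-1$ homogeneous polynomials whose leading terms $t_2^{q-1},\dots,t_s^{q-1}$ form a regular sequence, so $J$ is a complete intersection of height $s-1$ with $\deg(S/J)=(q-1)^{s-1}$. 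On the other hand $\deg(S/I(\mathbb{T}))=|\mathbb{T}|=(q-1)^{s-1}$, since there are $(q-1)^{s-1}$ distinct classes (fix the first coordinate to be $1$ by scaling, then the remaining $s-1$ coordinates range freely over $K^*$, and distinct such tuples give distinct points of $\mathbb{P}^{s-1}$). Since $J\subseteq I(\mathbb{T})$, $S/J$ surjects onto $S/I(\mathbb{T})$; as both are one-dimensional Cohen--Macaulay (the complete intersection $S/J$ is automatically Cohen--Macaulay) with the same degree, the surjection must be an isomorphism, forcing $J=I(\mathbb{T})$.

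For part (b), once (a) is established I would compute the Hilbert series of a complete intersection directly. Using the Koszul resolution of $S/J$ on the regular sequence $t_2^{q-1}-t_1^{q-1},\dots,t_s^{q-1}-t_1^{q-1}$ (each of degree $q-1$), the Hilbert series is
\[
F_{\mathbb{T}}(t)=\frac{(1-t^{q-1})^{s-1}}{(1-t)^s}.
\]
Alternatively, since $t_s$ is a regular element on $S/I(\mathbb{T})$, I could pass to the Artinian reduction $S/(I(\mathbb{T}),t_s)\cong K[t_1,\dots,t_{s-1}]/(\{t_i^{q-1}-t_1^{q-1}\}_{i=2}^{s-1},t_1^{q-1})=K[t_1,\dots,t_{s-1}]/(t_1^{q-1},\dots,t_{s-1}^{q-1})$, whose Hilbert series is $(1-t^{q-1})^{s-1}/(1-t)^{s-1}$; multiplying by $1/(1-t)$ to undo the reduction gives the claimed formula.

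For part (c), I would read both invariants off the numerator $h(t)=(1+t+\cdots+t^{q-2})^{s-1}$ of $F_{\mathbb{T}}(t)$ written over $(1-t)$. The degree $\deg(S/I(\mathbb{T}))$ equals $h(1)=(q-1)^{s-1}$, and the regularity equals $\deg h(t)=(s-1)(q-2)$, using the fact recalled in Section~\ref{prelim-invariants-of-I} that for a one-dimensional Cohen--Macaulay graded ring the regularity index is the degree of the $h$-polynomial and the degree is the sum of its coefficients. I expect the only real content to be the reverse inclusion in part (a); the degree computation $|\mathbb{T}|=(q-1)^{s-1}$ and the Cohen--Macaulayness needed to upgrade the surjection to an isomorphism are the points to state carefully, while (b) and (c) are then immediate formal consequences.
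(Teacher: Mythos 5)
Your argument is correct, and in fact it supplies a proof where the paper gives none: Proposition~\ref{ci-summary} is quoted from \cite{GRH} without proof, so there is no in-paper argument to compare against. Your route is the standard one for such statements: the containment $J\subseteq I(\mathbb{T})$ from $x^{q-1}=1$ on $K^*$, the count $|\mathbb{T}|=(q-1)^{s-1}$, and the upgrade of the surjection $S/J\twoheadrightarrow S/I(\mathbb{T})$ to an isomorphism because a one-dimensional Cohen--Macaulay ring has no nonzero finite-length submodule, so equality of multiplicities forces the kernel to vanish. Two small points worth making explicit if you write this up: the claim that the leading terms of the generators are $t_2^{q-1},\ldots,t_s^{q-1}$ requires a monomial order in which $t_1$ is \emph{smallest} (with the order $t_1\succ\cdots\succ t_s$ used elsewhere in the paper every generator has leading term $t_1^{q-1}$); alternatively one can see directly that $S/J$ is a free $K[t_1]$-module on the monomials $t_2^{a_2}\cdots t_s^{a_s}$ with $0\le a_i\le q-2$, which gives the regular sequence, the dimension, and the degree at once. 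Also, your appeal to additivity of multiplicity in short exact sequences only needs $S/J$ (not $S/I(\mathbb{T})$) to be Cohen--Macaulay, which is automatic for a complete intersection. Parts (b) and (c) then follow formally from the Koszul resolution and the description of the regularity index and degree via the $h$-polynomial recalled in Section~\ref{prelim-invariants-of-I}, exactly as you say.
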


When $I(X)$ is a complete intersection, there is a general formula for
the dimension of any projective Reed-Muller code arising from $X$
\cite{duursma-renteria-tapia}. For a projective torus
one can easily find a formula for the dimension as shown below. 

\begin{corollary}{\cite{duursma-renteria-tapia}} If\/ $\mathbb{T}$ is a projective torus 
in $\mathbb{P}^{s-1}$, then the length of $C_\mathbb{T}(d)$ is
$(q-1)^{s-1}$ and its dimension is given by 
$$
\dim_KC_\mathbb{T}(d)=\sum_{j=0}^{\left\lfloor\frac{d}{q-1}\right\rfloor}(-1)^j{s-1\choose
j}{s-1+d-j(q-1)\choose s-1}.
$$
\end{corollary}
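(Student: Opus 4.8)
The corollary follows by combining part (b) of Proposition~\ref{ci-summary} with the general principle, recalled earlier in this section, that $\dim_K C_\mathbb{T}(d) = H_\mathbb{T}(d)$, so it suffices to extract the $d$-th coefficient of the power series $F_\mathbb{T}(t) = (1-t^{q-1})^{s-1}/(1-t)^s$. The length assertion is immediate from Proposition~\ref{ci-summary}(c), since the length of $C_\mathbb{T}(d)$ is $\deg(S/I(\mathbb{T})) = (q-1)^{s-1}$.

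**Carrying it out.** First I would expand the numerator by the binomial theorem,
$$
(1-t^{q-1})^{s-1} = \sum_{j=0}^{s-1} (-1)^j \binom{s-1}{j} t^{j(q-1)},
$$
and the denominator by the negative binomial series,
$$
\frac{1}{(1-t)^s} = \sum_{k=0}^{\infty} \binom{s-1+k}{s-1} t^k.
$$
Multiplying these two series and collecting the coefficient of $t^d$ gives
$$
H_\mathbb{T}(d) = \sum_{j : j(q-1) \le d} (-1)^j \binom{s-1}{j} \binom{s-1+d-j(q-1)}{s-1},
$$
where the summation condition $j(q-1) \le d$ is exactly $j \le \lfloor d/(q-1)\rfloor$ (and one checks that the other constraint $j \le s-1$ from the numerator is automatically subsumed once $d \le (s-1)(q-1)$, while for larger $d$ the binomial $\binom{s-1}{j}$ vanishes for $j>s-1$ anyway, so the stated upper limit is harmless). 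Since $H_\mathbb{T}(d) = \dim_K C_\mathbb{T}(d)$, this is the asserted formula.

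**The only subtlety.** There is essentially no obstacle here; the one point that deserves a sentence is the interplay between the two natural cutoffs on the index $j$. For $d < (s-1)(q-1)$ one is genuinely in the range where $H_\mathbb{T}$ has not yet stabilized to its eventual constant value $(q-1)^{s-1}$, and the floor $\lfloor d/(q-1)\rfloor$ is the binding constraint; for $d \ge (s-1)(q-1)$ the formula still holds because the terms with $j>s-1$ contribute zero, and indeed one can verify directly that the sum then collapses to $(q-1)^{s-1}$, consistent with $\mathrm{reg}(S/I(\mathbb{T})) = (s-1)(q-2)$ from Proposition~\ref{ci-summary}(c). I would present the computation in the generic range and remark in one line that the edge cases are covered by the vanishing of the relevant binomial coefficients.
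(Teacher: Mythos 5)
Your proposal is correct and follows essentially the same route as the paper: both obtain the Hilbert series $(1-t^{q-1})^{s-1}/(1-t)^s$ from Proposition~\ref{ci-summary}, expand it as the product of the binomial and negative-binomial series, and read off the coefficient of $t^d$, using $\dim_K C_\mathbb{T}(d)=H_\mathbb{T}(d)$ and $\deg(S/I(\mathbb{T}))=(q-1)^{s-1}$ for the length. Your extra remark about the interplay of the cutoffs $j\leq s-1$ and $j\leq\lfloor d/(q-1)\rfloor$ is a harmless refinement the paper leaves implicit.
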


\begin{proof} According to Proposition~\ref{ci-summary}, the length of
$C_\mathbb{T}(d)$ is $(q-1)^{s-1}$ and the Hilbert
series of the graded algebra $S/I(\mathbb{T})$ is given by 
$$ 
F_\mathbb{T}(t)=
\sum_{d=0}^{\infty}H_\mathbb{T}(d)t^d=\frac{(1-t^{q-1})^{s-1}}{(1-t)^s}=
\left[\sum_{j=0}^{s-1}(-1)^j\binom{s-1}{j}t^{j(q-1)}\right] 
\left[\sum_{i=0}^\infty\binom{s-1+i}{s-1}t^i\right].
$$
Hence, comparing the coefficients of $t^d$, we get
$$
H_\mathbb{T}(d)=\sum_{i+j(q-1)=d}(-1)^j\binom{s-1}{j}\binom{s-1+i}{s-1}.
$$
Thus making $i=d-j(q-1)$ we obtain the required expression for $\dim_KC_\mathbb{T}(d)$. 
\end{proof}

In 
Section~\ref{minimum-distance-section} we compute the 
minimum distance of $C_\mathbb{T}(d)$, which was an important piece of
information---from the viewpoint of coding theory---missing in the
literature. 

\section{Minimum distance of parameterized
codes}\label{minimum-distance-section}

We continue to use the notation and definitions used in the
introduction. In this section we determine
the minimum distance of $C_X(d)$ when $X$ is a projective torus in $\mathbb{P}^{s-1}$.  

We begin with a well known and classical general upper bound.

\begin{lemma}{\cite[Lemma 3A, p.~147]{schmidt}}\label{giusti-remark} Let $0\neq G=G(t_1,\ldots,t_s)\in
S$ be a polynomial of total degree $d$. Then the number $N$ of zeros
of $G$ in $\mathbb{F}_q^s$ satisfies 
$$
N\leq 
dq^{s-1}.
$$
If $G$ is homogeneous, then the number of its non-trivial zeros is at
most $d(q^{s-1}-1)$.
\end{lemma}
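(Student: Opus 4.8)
The proof goes by induction on the number of variables $s$. For the base case $s=1$, a nonzero polynomial $G(t_1)$ of degree $d$ has at most $d$ roots in $\mathbb{F}_q$, which is exactly the asserted bound $dq^{0}=d$. For the inductive step, write $G$ as a polynomial in $t_s$ with coefficients in $K[t_1,\ldots,t_{s-1}]$:
\[
G=\sum_{j=0}^{e} g_j(t_1,\ldots,t_{s-1})\,t_s^{\,j},
\]
where $e\le d$ and $g_e\neq 0$. Split the zero set of $G$ in $\mathbb{F}_q^s$ according to the value of the first $s-1$ coordinates. For a point $a=(a_1,\ldots,a_{s-1})\in\mathbb{F}_q^{s-1}$ with $g_e(a)\neq 0$, the one-variable polynomial $G(a,t_s)$ is nonzero of degree $e$, hence has at most $e\le d$ zeros; there are at most $q^{s-1}$ such points $a$. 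For a point $a$ with $g_e(a)=0$, we crudely allow all $q$ values of $t_s$; by induction applied to the nonzero polynomial $g_e$ of total degree at most $d-e$ in $s-1$ variables, the number of such $a$ is at most $(d-e)q^{s-2}$. Combining,
\[
N\le e\,q^{s-1}+(d-e)q^{s-2}\cdot q = e\,q^{s-1}+(d-e)q^{s-1}=d\,q^{s-1},
\]
which completes the induction. The only mild subtlety is bookkeeping the degree of $g_e$ so that the inductive hypothesis applies with the right total degree, but since $\deg g_e \le d-e$ this causes no trouble.

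For the homogeneous case, suppose $G$ is homogeneous of degree $d$ and let $N^\ast$ be the number of its nontrivial zeros in $\mathbb{F}_q^s\setminus\{0\}$. By homogeneity, $G$ vanishes at $\alpha$ iff it vanishes on the whole line $K\alpha$; each such line contributes $q-1$ nonzero points, and $0$ is always a zero of $G$ (as $d\ge 1$). Thus from $N=N^\ast+1\le dq^{s-1}$ we would only get $N^\ast\le dq^{s-1}-1$, which is weaker than claimed. Instead, partition the nonzero zeros by projectivizing: the nonzero zeros of $G$ are a union of $(q-1)$-point punctured lines through the origin, so $N^\ast=(q-1)Z$ where $Z$ is the number of projective zeros in $\mathbb{P}^{s-1}$. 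It then suffices to bound $Z$ by $d\,\frac{q^{s-1}-1}{q-1}$, i.e.\ by $d$ times the number of points of a hyperplane's worth; equivalently to show a homogeneous form of degree $d$ has at most $d\,\theta_{s-2}$ projective zeros, where $\theta_{s-2}=|\mathbb{P}^{s-2}|$. One clean way: dehomogenize. The projective zeros split into those on the hyperplane $t_s=0$ (a $\mathbb{P}^{s-2}$) and those with $t_s\neq 0$; on the latter we may set $t_s=1$ and count affine zeros of $G(t_1,\ldots,t_{s-1},1)$ in $\mathbb{F}_q^{s-1}$, which by the affine bound already proved is at most $d\,q^{s-2}$. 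If $G(t_1,\ldots,t_{s-1},1)$ is the zero polynomial then $t_s$ divides $G$ and one recurses on $G/t_s$; otherwise
\[
N^\ast\le (q-1)\bigl(d\,q^{s-2}+|\mathbb{P}^{s-2}|\bigr)
\]
and a short computation checking $q^{s-2}(q-1)d+ (q^{s-1}-1) \le d(q^{s-1}-1)$ for $d\ge 1$ finishes it.

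The main obstacle is precisely this last passage: the naive substitution $N=N^\ast+1$ loses too much, so one must organize the homogeneous count through projective space (or, equivalently, handle the divisor $t_s\mid G$ separately and recurse). Everything else is a routine two-step induction on the number of variables, and since Lemma~\ref{giusti-remark} is quoted from \cite[Lemma 3A, p.~147]{schmidt}, one could alternatively just cite it; but the self-contained argument above is short enough to include.
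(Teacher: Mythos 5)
Your affine bound is fine: the induction on $s$ via the leading coefficient $g_e$ of $G$ in $t_s$ is the standard argument, and the bookkeeping $\deg g_e\le d-e$ closes it. (The paper offers no proof of this lemma at all; it simply cites Schmidt, so a self-contained argument is welcome in principle.)

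The homogeneous half, however, has a genuine gap: the ``short computation'' you defer to at the end is false. The inequality $q^{s-2}(q-1)d+(q^{s-1}-1)\le d(q^{s-1}-1)$ is equivalent to $q^{s-1}-1\le d(q^{s-2}-1)$, which fails for every $q\ge 2$ when $d=1$, and fails for all $d\ge 1$ when $s=2$. Concretely, for $G=t_1$ in two variables your intermediate estimate gives $N^\ast\le(q-1)(1+1)=2(q-1)$, while the target is $q-1$. The loss comes from bounding the projective zeros lying on the hyperplane $t_s=0$ by all of $\mathbb{P}^{s-2}$; that is exactly one hyperplane's worth too generous. The repair is to run a second induction on $s$ for the homogeneous statement itself: if $G(t_1,\ldots,t_{s-1},0)\not\equiv 0$, it is a nonzero homogeneous polynomial of degree $d$ in $s-1$ variables, so by induction it has at most $d(q^{s-2}-1)$ nontrivial zeros, and then $(q-1)dq^{s-2}+d(q^{s-2}-1)=d(q^{s-1}-1)$ exactly; if $G(t_1,\ldots,t_{s-1},0)\equiv 0$, then $t_s\mid G$, and you factor out the largest power of $t_s$ and add the two zero counts. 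Note also that the degenerate case you do single out, $G(t_1,\ldots,t_{s-1},1)\equiv 0$, cannot occur for homogeneous $G\neq 0$: writing $G=\sum_j h_j(t_1,\ldots,t_{s-1})t_s^j$ with $h_j$ homogeneous of degree $d-j$, the sum $\sum_j h_j$ vanishes identically only if every $h_j$ does. The case that genuinely needs separate treatment is $t_s\mid G$, not this one.
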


The proof of this lemma, given in the book of W. M. Schmidt
\cite{schmidt}, can be easily adapted to obtain the following
auxiliary result. 

\begin{lemma}\label{vila-vaz-pinto} Let $0\neq G=G(t_1,\ldots,t_s)\in 
S$ be a polynomial of total degree $d$. If 
$$
Z_G:=\{x\in (K^*)^s\, \vert\, G(x)=0\},
$$
then $|Z_G|\leq d(q-1)^{s-1}$.
\end{lemma}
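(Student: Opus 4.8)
The plan is to mimic the proof of Lemma~\ref{giusti-remark} from \cite{schmidt}, but carrying out the induction over $K^*$ instead of over all of $K$. First note that if $G$ is a nonzero constant then $d=0$ and $Z_G=\emptyset$, so the inequality is trivial; hence we may assume $d\geq 1$. I would proceed by induction on the number of variables $s$. For $s=1$, a nonzero polynomial $G(t_1)$ of degree $d$ has at most $d$ roots in $K$, hence at most $d$ roots in $K^*$, so $|Z_G|\leq d=d(q-1)^0$.

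For the inductive step, let $s\geq 2$ and write $G$ as a polynomial in the last variable,
$$
G=\sum_{i=0}^{e}g_i(t_1,\ldots,t_{s-1})\,t_s^{\,i},\qquad g_e\neq 0,
$$
where $0\leq e\leq d$ and $\deg(g_e)\leq d-e$. Partition $(K^*)^{s-1}$ into the set $A$ of points $a$ with $g_e(a)\neq 0$ and its complement $B=\{a\in(K^*)^{s-1}\,\vert\, g_e(a)=0\}$. If $a\in A$, then $G(a,t_s)$ is a nonzero polynomial in $t_s$ of degree $e$, so it vanishes at most $e$ times on $K^*$; since $|A|\leq(q-1)^{s-1}$, the points of $A$ contribute at most $e(q-1)^{s-1}$ to $|Z_G|$. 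If $a\in B$, we use only the trivial bound that $G(a,t_s)$ vanishes at most $q-1$ times on $K^*$; and since $g_e$ is a nonzero polynomial of total degree at most $d-e$ in $s-1$ variables, the induction hypothesis gives $|B|\leq(d-e)(q-1)^{s-2}$, so the points of $B$ contribute at most $(d-e)(q-1)^{s-2}(q-1)=(d-e)(q-1)^{s-1}$ to $|Z_G|$. Summing the two contributions,
$$
|Z_G|\leq e(q-1)^{s-1}+(d-e)(q-1)^{s-1}=d(q-1)^{s-1},
$$
as desired.

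I do not expect a genuine obstacle: the argument is a direct transcription of Schmidt's, with $q^{s-1}$ replaced by $(q-1)^{s-1}$ and the elementary observation that a degree-$e$ polynomial in one variable has at most $e$, hence at most $q-1$, roots in $K^*$. The only places that call for a moment's attention are the degenerate values of $e$: when $e=0$, so that $G$ does not involve $t_s$, the set $A$ contributes $0$ and $B$ contributes at most $d(q-1)^{s-1}$; and when $e=d$, so that $g_e$ is a nonzero constant, one has $B=\emptyset$. In both cases the displayed bound is unchanged, and the degree bookkeeping $\deg(g_e)\leq d-e$ together with $e\leq d$ is exactly what makes the final sum collapse to $d(q-1)^{s-1}$.
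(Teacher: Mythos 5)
Your proof is correct, and it is exactly the adaptation the paper has in mind: the paper gives no proof of this lemma, remarking only that Schmidt's argument for Lemma~\ref{giusti-remark} ``can be easily adapted,'' and your induction on $s$ with the split according to whether the leading coefficient $g_e$ vanishes is precisely that adaptation with $K$ replaced by $K^*$ (and $q$ by $q-1$). The degree bookkeeping $\deg(g_e)\leq d-e$ and the treatment of the degenerate cases $e=0$ and $e=d$ are handled correctly.
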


\begin{lemma}\label{feb21-10} Let $d,d',s$ be positive integers such that
$d=k(q-2)+\ell$ and $d'=k'(q-2)+\ell'$ for some integers
$k,k',\ell,\ell'$ satisfying that $k,k'\geq 0$, $1\leq\ell\leq
q-2$ and $1\leq\ell'\leq q-2$. If $d'\leq d$ and $k\leq s-1$, 
then $k'\leq k$ and 
$$
-(q-1)^{s-k'}+\ell'(q-1)^{s-k'-1}\leq -(q-1)^{s-k}+\ell(q-1)^{s-k-1}.
$$
\end{lemma}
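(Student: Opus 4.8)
The plan is to first observe that the hypotheses implicitly require $q\ge 3$ (otherwise the condition $1\le\ell\le q-2$ cannot be met), and that the decomposition $d=k(q-2)+\ell$ with $1\le\ell\le q-2$ is precisely the Euclidean division of $d-1$ by $q-2$. Indeed, writing $d-1=k(q-2)+(\ell-1)$ with $0\le\ell-1\le q-3$ exhibits $k=\lfloor(d-1)/(q-2)\rfloor$, so the pair $(k,\ell)$ is uniquely determined by $d$, and likewise $(k',\ell')$ by $d'$. Since $d'\le d$ gives $d'-1\le d-1$, monotonicity of the floor function yields $k'=\lfloor(d'-1)/(q-2)\rfloor\le\lfloor(d-1)/(q-2)\rfloor=k$, which is the first assertion.

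For the displayed inequality I would rewrite both sides in factored form. One has $-(q-1)^{s-k}+\ell(q-1)^{s-k-1}=-(q-1-\ell)(q-1)^{s-k-1}$, and similarly for the primed data, with $1\le q-1-\ell\le q-2$ and $1\le q-1-\ell'\le q-2$. After cancelling the minus sign, the claim becomes
$$(q-1-\ell)(q-1)^{s-k-1}\le (q-1-\ell')(q-1)^{s-k'-1}.$$
Note that the hypothesis $k\le s-1$ guarantees $s-k-1\ge 0$, so every power of $q-1$ that occurs is a genuine nonnegative-exponent power of the positive integer $q-1$; this is the one spot where $k\le s-1$ is actually used, and overlooking it would be the only real pitfall.

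Finally I would split into two cases according to whether $k'=k$ or $k'<k$. If $k'=k$, then $d'\le d$ forces $\ell'\le\ell$, hence $q-1-\ell\le q-1-\ell'$, and since the two exponents agree the inequality is immediate. If $k'\le k-1$, then $s-k'-1\ge s-k$, so $(q-1)^{s-k'-1}\ge(q-1)^{s-k}=(q-1)(q-1)^{s-k-1}$; combining this with $q-1-\ell'\ge 1$ gives $(q-1-\ell')(q-1)^{s-k'-1}\ge(q-1)(q-1)^{s-k-1}$. On the other hand $q-1-\ell\le q-2$ gives $(q-1-\ell)(q-1)^{s-k-1}\le(q-2)(q-1)^{s-k-1}<(q-1)(q-1)^{s-k-1}$, and chaining the two estimates closes this case, and with it the proof. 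The argument is entirely elementary; the only care needed is tracking the direction of the inequalities after the sign change and checking that the exponents stay nonnegative.
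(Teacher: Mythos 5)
Your proof is correct and follows essentially the same route as the paper: after factoring both sides as $-(q-1-\ell)(q-1)^{s-k-1}$, your reduction and the two cases $k'=k$, $k'\le k-1$ (using $q-1-\ell\le q-2<q-1$ and $q-1-\ell'\ge 1$) match the paper's reduction to $q-1-\ell\le(q-1)^{k-k'}(q-1-\ell')$ and its case analysis. The only addition is that you spell out, via Euclidean division of $d-1$ by $q-2$, the claim $k'\le k$ that the paper dismisses as "not hard to see."
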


\begin{proof} It is not hard to see that $k'\leq k$. It suffices to
prove the equivalent inequality:
$$
q-1-\ell\leq (q-1)^{k-k'}(q-1-\ell').
$$
If $k=k'$, then $\ell\geq \ell'$ and the inequality holds. If $k\geq
k'+1$, then 
$$
q-1-\ell\leq q-1\leq (q-1)(q-1-\ell')\leq (q-1)^{k-k'}(q-1-\ell'),
$$ 
as required. 
\end{proof}

Let $\mathbb{T}^*=(K^*)^s$ be an {\it affine torus\/}. For
$G=G(t_1,\ldots,t_s)\in S$, we denote 
the set of zeros of $G$ in $\mathbb{T}^*$ by $Z_G$. 

\begin{theorem}\label{maria-vila-bound}
Let $G=G(t_1,\ldots,t_s)\in S$ be a polynomial of 
total degree $d\geq 1$ such that $\deg_{t_i}(G)\leq q-2$ for $i=1,\ldots,s$. 
If $d=k(q-2)+\ell$ with $1\leq \ell\leq q-2$ and
$0\leq k\leq s-1$, then 
$$
|Z_G|\leq
(q-1)^{s-k-1}((q-1)^{k+1}-(q-1)+\ell).
$$
\end{theorem}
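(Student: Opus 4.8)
The plan is to prove the estimate by induction on the number of variables $s$, peeling off one variable at a time. Write $B(s,d):=(q-1)^{s-k-1}((q-1)^{k+1}-(q-1)+\ell)$ for the claimed bound, and record the convenient reformulation $B(s,d)=(q-1)^s-(q-1)^{s-k-1}((q-1)-\ell)$. Since the affine torus $(K^*)^s$ has $(q-1)^s$ points, proving $|Z_G|\le B(s,d)$ is the same as proving the lower bound $\overline{Z}_G\ge(q-1)^{s-k-1}((q-1)-\ell)$, where $\overline{Z}_G:=(q-1)^s-|Z_G|$ is the number of points of the torus at which $G$ does not vanish; I will work with $\overline{Z}_G$.

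First I would dispose of two degenerate situations. If $k=0$ (this includes the base case $s=1$), then $d=\ell$ and Lemma~\ref{vila-vaz-pinto} already gives $|Z_G|\le d(q-1)^{s-1}=\ell(q-1)^{s-1}=B(s,\ell)$. If some variable, say $t_s$ after relabeling, does not occur in $G$, then $Z_G$ is a cylinder, so $|Z_G|=(q-1)\,|Z_{G'}|$ where $G'$ is $G$ regarded as a polynomial in $t_1,\ldots,t_{s-1}$; here $G'$ has total degree $d$ in $s-1$ variables each of degree at most $q-2$, which forces $k\le s-2$, so the induction hypothesis for $s-1$ gives $|Z_G|\le(q-1)B(s-1,d)=B(s,d)$.

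In the main case assume $k\ge1$ and $\deg_{t_i}(G)\ge1$ for every $i$. Set $a:=\deg_{t_s}(G)\ge1$ and write $G=\sum_{j=0}^{a}g_j(t_1,\ldots,t_{s-1})\,t_s^{j}$ with $g_a\ne0$. The key point is that for each $\beta\in(K^*)^{s-1}$ with $g_a(\beta)\ne0$ the one-variable polynomial $G(\beta,t_s)$ has degree exactly $a\le q-2<|K^*|$, hence at least $(q-1)-a$ non-zeros in $K^*$; summing over such $\beta$ yields the recursion
\[
\overline{Z}_G\ \ge\ (q-1-a)\,\overline{Z}_{g_a},
\]
where $\overline{Z}_{g_a}$ counts the non-zeros of $g_a$ in $(K^*)^{s-1}$. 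If $g_a$ is a nonzero constant, then $\overline{Z}_{g_a}=(q-1)^{s-1}$, so $|Z_G|\le a(q-1)^{s-1}$, which is at most $B(s,d)$ by a short estimate using $a\le q-2$ and $k\ge1$. Otherwise $d_a:=\deg(g_a)\ge1$; writing $d_a=k_a(q-2)+\ell_a$ (so that $k_a\le s-2$ automatically) the induction hypothesis applies to $g_a$ and gives $\overline{Z}_{g_a}\ge(q-1)^{s-2-k_a}((q-1)-\ell_a)$.

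It then remains to verify the numerical inequality $(q-1-a)(q-1)^{s-2-k_a}((q-1)-\ell_a)\ge(q-1)^{s-1-k}((q-1)-\ell)$, and this is the step I expect to be the real obstacle. I would route it through the auxiliary degree $d':=d_a+a$, which satisfies $d'\le d$ since $a+\deg(g_a)\le\deg(G)$: writing $d'=k'(q-2)+\ell'$ one has $(k',\ell')=(k_a,\ell_a+a)$ or $(k_a+1,\ell_a+a-(q-2))$, and in either case a short computation --- collapsing to $a\ell_a\ge0$ in the first case and to $(q-2-a)(q-2-\ell_a)\ge0$ in the second --- gives $(q-1-a)(q-1)^{s-2-k_a}((q-1)-\ell_a)\ge(q-1)^{s-1-k'}((q-1)-\ell')$; then Lemma~\ref{feb21-10}, applied to the pair $d'\le d$ with $k\le s-1$, yields exactly $(q-1)^{s-1-k'}((q-1)-\ell')\ge(q-1)^{s-1-k}((q-1)-\ell)$, and chaining the two inequalities finishes the induction. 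The point worth spotting is that introducing $d'=d_a+a$ lets Lemma~\ref{feb21-10} absorb the monotonicity of the bound in $d$, leaving over only the elementary inequality $(q-2-a)(q-2-\ell_a)\ge0$ to pay for the degree budget consumed by the exponent $a$.
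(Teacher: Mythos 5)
Your proof is correct, but it reduces from $s$ to $s-1$ variables by a genuinely different mechanism than the paper's. The paper distinguishes a variable $t_1$, factors $G=(t_1-\beta_1)^{a_1}\cdots(t_1-\beta_r)^{a_r}G'$ so as to isolate the values of $t_1$ at which the whole slice vanishes, and then applies the inductive hypothesis to each of the up to $q-1-r$ nonzero slices $G'(\beta_i,t_2,\ldots,t_s)$; this forces a three-case analysis ($\ell>r$ with $k=s-1$, $\ell>r$ with $k\le s-2$, and $\ell\le r$), and in the case $k=s-1$ it needs the Combinatorial Nullstellensatz (or the direct argument via the polynomials $t_i^{q-1}-1$) to guarantee that the generic slices do not vanish identically on the torus. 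You instead pass to the complementary count $\overline{Z}_G$, apply the inductive hypothesis only once --- to the leading coefficient $g_a$ of $G$ in $t_s$ --- and use nothing but the trivial one-variable bound on the fibers over the non-zeros of $g_a$, i.e.\ the classical Schwartz--Zippel-style recursion $\overline{Z}_G\ge(q-1-a)\,\overline{Z}_{g_a}$. Your route is more economical: it treats $k=s-1$ uniformly with no appeal to the Nullstellensatz, and the case analysis collapses to the two elementary inequalities $a\ell_a\ge0$ and $(q-2-a)(q-2-\ell_a)\ge0$, according to whether the expansion of $d'=d_a+a$ in the form $k'(q-2)+\ell'$ carries or not; the auxiliary degree $d'=d_a+a\le d$ is exactly the right input for Lemma~\ref{feb21-10}, which both proofs use to absorb the monotonicity of the bound in $d$. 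What the paper's slicing buys in exchange is that it keeps visible where the zeros accumulate (full parallel hyperplane slices), which is precisely the shape of the extremal polynomial $F=f_1\cdots f_k g_\ell$ exhibited later in Theorem~\ref{maria-vila-hiram-eliseo}. I checked your numerical steps --- the reformulation $B(s,d)=(q-1)^s-(q-1)^{s-k-1}(q-1-\ell)$, the constant-$g_a$ case via $a\le q-2$ and $k\ge1$, the missing-variable case via $k\le s-2$, and both carry computations --- and they all hold.
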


\begin{proof} 
By induction on $s$. If $s=1$, then $k=0$ and $d=\ell$. Then
$|Z_G|\leq \ell$ because a non-zero polynomial in one variable of degree $d$ has at
most $d$ roots. Assume $s\geq 2$. By Lemma~\ref{vila-vaz-pinto} we may also assume that 
$k\geq 1$. There are $r\geq 0$ distinct elements $\beta_1,\ldots,\beta_r$ in
$K^*$ and $G'\in S$ such that
$$
G=(t_1-\beta_1)^{a_1}\cdots(t_1-\beta_r)^{a_r}G',\ \ \ \ \ a_i\geq 
1\mbox{ for all } i,
$$
and $G'(\beta,t_2,\ldots,t_s)\neq 0$ for any $\beta\in K^*$. Notice
that $r\leq \sum_ia_i\leq q-2$ because the degree of $G$ in $t_1$ is
at most $q-2$. We can
write $K^*=\{\beta_1,\ldots,\beta_{q-1}\}$. Let $d_i'$ be the degree 
of $G'(\beta_i,t_2,\ldots,t_s)$ and let $d'=\max\{d_i'\vert\ r+1\leq
i\leq q-1\}$. If $d'=0$, then $|Z_{G}|=r(q-1)^{s-1}$ and consequently  
$$
r(q-1)^{s-1}\leq (q-2)(q-1)^{s-1}\leq
(q-1)^{s-k-1}((q-1)^{k+1}-(q-1)+\ell). 
$$
The second inequality uses that $k\geq 1$. 
Thus we may assume that $d'>0$ and also that
$\beta_{r+1},\ldots,\beta_m$ are the elements $\beta_i$ of
$\{\beta_{r+1},\ldots,\beta_{q-1}\}$ such that
$G'(\beta_i,t_2,\ldots,t_s)$ has
positive degree. Notice that $d=\sum_{i}a_i+\deg(G')\geq r+d'$. The
polynomial 
$$H:=(t_1-\beta_1)^{a_1}\cdots(t_1-\beta_r)^{a_r}$$ 
has
exactly $r(q-1)^{s-1}$ roots in $(K^{*})^s$. Hence counting the roots
of $G'$ that are not in $Z_H$ we obtain:
\begin{equation}\label{feb22-10}
|Z_G|\leq
r(q-1)^{s-1}+\sum_{i=r+1}^{m}|Z_{G'(\beta_i,t_2,\ldots,t_s)}|.
\end{equation}
For each $r+1\leq i\leq m$, we can write $d_i'=k_i'(q-2)+\ell_i'$, with $1\leq
\ell_i'\leq q-2$. The proof will be divided in three cases. 

Case (I): Assume $\ell>r$ and $k=s-1$. By
\cite[Theorem~1.2]{alon-cn}, the non-zero polynomial
$G'(\beta_i,t_2,\ldots,t_s)$ cannot be the
zero-function on $(K^{*})^{s-1}$ for any $i$ because its degree in
each of the variables $t_2,\ldots,t_s$ is at most $q-2$. A direct argument to show that 
$G'(\beta_i,t_2,\ldots,t_s)$ cannot be the zero-function on
$(K^{*})^{s-1}$ is to notice that if this non-homogeneous polynomial
vanishes on $(K^{*})^{s-1}$, then it must be a polynomial combination
of $t_2^{q-1}-1,\ldots,t_s^{q-1}-1$, a contradiction. Thus, by Eq.~(\ref{feb22-10}), we get
$$
|Z_G|\leq
r(q-1)^{s-1}+(q-1-r)((q-1)^{s-1}-1)\leq(q-1)^s-(q-1)+\ell.
$$

Case (II): Assume $\ell>r$ and $k\leq s-2$. Then $d-r=k(q-2)+(\ell-r)$
with $1\leq \ell-r\leq q-2$. Since $d_i'\leq d-r$ for $i=r+1,\ldots,m$, by
Lemma~\ref{feb21-10}, we get $k_i'\leq k$ for $r+1\leq i\leq m$. 
Then by induction
hypothesis, using Eq.~(\ref{feb22-10}) and Lemma~\ref{feb21-10}, we
obtain:
\begin{eqnarray*}
|Z_G|&\leq&
r(q-1)^{s-1}+\sum_{i=r+1}^{m}
|Z_{G'(\beta_i,t_2,\ldots,t_s)}|\\ 
&\leq& r(q-1)^{s-1}+\sum_{i=r+1}^m
\left[(q-1)^{(s-1)-k_i'-1}((q-1)^{k_i'+1}-(q-1)+\ell_i')\right]\\
&\leq& r(q-1)^{s-1}+(q-1-r)
\left[(q-1)^{(s-1)-k-1}((q-1)^{k+1}-(q-1)+(\ell-r))\right]\\  
&\leq& 
(q-1)^{s-k-1}((q-1)^{k+1}-(q-1)+\ell).
\end{eqnarray*}

Case (III): Assume $\ell\leq r$. Then we can write 
$d-r=k_2(q-2)+\ell_2$ with $k_2=k-1$ and $\ell_2=q-2+\ell-r$. Notice
that $0\leq k_2\leq s-2$ and $1\leq \ell_2\leq q-2$ because 
$k\geq 1$, $r\leq q-2$ and $k\leq s-1$. Since $d_i'\leq d-r$ for $i>r$, by
Lemma~\ref{feb21-10}, we get $k_i'\leq k_2$ for $i=r+1,\ldots,m$. 
Then by induction hypothesis, using Eq.~(\ref{feb22-10}) and
Lemma~\ref{feb21-10}, we
obtain:
\begin{eqnarray*}
|Z_G|&\leq&
r(q-1)^{s-1}+\sum_{i=r+1}^{m}
|Z_{G'(\beta_i,t_2,\ldots,t_s)}|\\ 
&\leq&r(q-1)^{s-1}+\sum_{i=r+1}^m
\left[(q-1)^{(s-1)-k_i'-1}((q-1)^{k_i'+1}-(q-1)+\ell_i')\right]\\
&\leq&r(q-1)^{s-1}+(q-1-r)
\left[(q-1)^{(s-1)-k_2-1}((q-1)^{k_2+1}-(q-1)+\ell_2)\right]\\ 
&=& r(q-1)^{s-1}+(q-1-r)
\left[(q-1)^{s-k-1}((q-1)^{k}-(q-1)+(q-2+\ell-r))\right]\\ 
&\leq& 
(q-1)^{s-k-1}((q-1)^{k+1}-(q-1)+\ell).
 \end{eqnarray*}
The last inequality uses that $r\leq q-2$. This completes the proof of the result.
\end{proof}

We come to the main result of this section.

\begin{theorem}\label{maria-vila-hiram-eliseo} 
If $X=\{[(x_1,\ldots,x_s)]\in\mathbb{P}^{s-1}\vert\,
x_i\in K^*\mbox{ for all }i\}$ is a projective torus in
$\mathbb{P}^{s-1}$ and $d\geq 1$, 
then the minimum distance of $C_X(d)$ is given by 
$$
\delta_d=\left\{\begin{array}{cll}
(q-1)^{s-(k+2)}(q-1-\ell)&\mbox{if}&d\leq (q-2)(s-1)-1,\\
1&\mbox{if}&d\geq (q-2)(s-1),
\end{array}
 \right.
$$
where $k$ and $\ell$ are the unique integers such that $k\geq 0$,
$1\leq \ell\leq q-2$ and $d=k(q-2)+\ell$. 
\end{theorem}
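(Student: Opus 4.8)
The plan is to compute $\delta_d$ by combining an upper bound for the number of zeros of evaluation-image polynomials (from Theorem~\ref{maria-vila-bound}) with a matching construction of a word of that weight. Recall that $\delta_d=|X|-\max\{|Z|\}$, where $Z$ runs over the projective zero sets in $X$ of nonzero classes $\overline{f}\in S_d/I(X)_d$; since $|X|=(q-1)^{s-1}$ by Proposition~\ref{ci-summary}(c), it suffices to find the maximum possible number of projective zeros. The first step is to reduce to the affine torus: because $I(\mathbb{T})=(\{t_i^{q-1}-t_1^{q-1}\}_{i=2}^s)$, every class in $S_d/I(X)_d$ has a representative whose degree in each variable is at most $q-2$ — indeed, one dehomogenizes by setting $t_1=1$, reduces the resulting polynomial in $t_2,\dots,t_s$ modulo $t_i^{q-1}-1$, and rehomogenizes. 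The projective zeros of $\overline{f}$ in $X$ then correspond (up to the $(q-1)$-fold scaling, which divides both $|X|$ and $|Z|$ by the same factor, or equivalently after fixing $t_1=1$) to the affine zeros in $(K^*)^{s-1}$ of the dehomogenized polynomial $G(t_2,\dots,t_s)$, which has $\deg_{t_i}(G)\le q-2$ for all $i$.

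The second step is the upper bound on the minimum distance. Apply Theorem~\ref{maria-vila-bound} in $s-1$ variables to $G$: writing the total degree $d'$ of $G$ as $d'=k'(q-2)+\ell'$ with $1\le\ell'\le q-2$ and $0\le k'\le s-2$, one gets $|Z_G|\le (q-1)^{(s-1)-k'-1}((q-1)^{k'+1}-(q-1)+\ell')$. One checks, using Lemma~\ref{feb21-10} together with $d'\le d$ and the case analysis on whether $k\le s-2$ or $k=s-1$, that this quantity is maximized (over all admissible $d'\le d$) at the value corresponding to $d$ itself, giving $|Z_G|\le (q-1)^{s-k-2}((q-1)^{k+1}-(q-1)+\ell)$ when $d\le(q-2)(s-1)-1$. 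Subtracting from $|X|=(q-1)^{s-1}$ yields $\delta_d\ge (q-1)^{s-1}-(q-1)^{s-k-2}((q-1)^{k+1}-(q-1)+\ell)=(q-1)^{s-k-2}(q-1-\ell)=(q-1)^{s-(k+2)}(q-1-\ell)$. When $d\ge(q-2)(s-1)$, the code is the full space $K^{|X|}$ (the regularity of $S/I(\mathbb{T})$ is $(s-1)(q-2)$ by Proposition~\ref{ci-summary}(c)), so $\delta_d=1$; this needs a brief separate remark that $d\ge(q-2)(s-1)$ is exactly the range where $\dim_KC_X(d)=|X|$.

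The third step is to exhibit a codeword attaining the bound for $d\le(q-2)(s-1)-1$. Pick a polynomial that is, after dehomogenizing, a product of linear forms of the shape $(t_i-\beta)$ chosen so that it vanishes on as large a "box" as possible: concretely, take $k$ full factors $\prod_{\beta\in K^*\setminus\{1\}}(t_i-\beta)=\frac{t_i^{q-1}-1}{t_i-1}$ for $i=2,\dots,k+1$ (each kills all but one value of that coordinate, using up degree $q-2$ apiece), and one further factor $(t_{k+2}-\gamma_1)\cdots(t_{k+2}-\gamma_\ell)$ of degree $\ell$. Rehomogenizing back to degree $d$ (multiplying by the appropriate power of $t_1$) gives $f\in S_d$ whose zero set in $X$ has exactly $(q-1)^{s-1}-(q-1)^{s-(k+2)}(q-1-\ell)$ points, hence a codeword of weight precisely $(q-1)^{s-(k+2)}(q-1-\ell)$; one must also verify $\overline f\ne 0$ in $S_d/I(X)_d$, which is clear since its dehomogenization has all partial degrees $\le q-2$ and is not the zero function on the torus (cf. the Combinatorial Nullstellensatz argument already invoked in the proof of Theorem~\ref{maria-vila-bound}).

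I expect the main obstacle to be the second step: verifying that the bound of Theorem~\ref{maria-vila-bound}, as a function of the total degree $d'$ of the representative, is genuinely largest at $d'=d$ — i.e. that passing to a lower-degree representative in the class cannot produce more zeros. This is where one leans on the monotonicity encoded in Lemma~\ref{feb21-10} (rewritten for $s-1$ variables) and must treat carefully the boundary case $k=s-1$, where the bound saturates at $(q-1)^{s-1}-1$ rather than following the generic formula; the bookkeeping with the two "regimes" $k\le s-2$ and $k=s-1$ parallels the case split in the proof of Theorem~\ref{maria-vila-bound} and should be handled the same way.
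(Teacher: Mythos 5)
Your proposal is correct and follows essentially the same route as the paper's proof: reduce a degree-$d$ class modulo the binomials $t_i^{q-1}-t_j^{q-1}$, dehomogenize to a polynomial on the affine torus $(K^*)^{s-1}$ with all partial degrees at most $q-2$, bound its zeros via Theorem~\ref{maria-vila-bound} together with the monotonicity of Lemma~\ref{feb21-10} in the degree $d'$ of the representative, and match the bound with the same product-of-linear-forms construction (your worry about the case $k=s-1$ is moot, since $d\le(q-2)(s-1)-1$ forces $k\le s-2$, whence $k'\le s-2$ as well). The paper likewise settles $d\ge(q-2)(s-1)$ by combining the regularity $(s-1)(q-2)$ with the Singleton bound.
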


\begin{proof} First we consider the case $1\leq d\leq (q-2)(s-1)-1$.
Then, in this case, we have that $k\leq s-2$. Let $\prec$ be the
graded reverse lexicographical order on 
the monomials of $S$. In this order $t_1\succ\cdots\succ t_s$. Let
$F$ be a homogeneous polynomial of $S$ of degree $d$ such that $F$
does not vanish on all $X$. 
By the division algorithm \cite[Theorem~1.5.9, 
p.~30]{AL}, we can write
\begin{equation}\label{march12-10}
F=h_1(t_1^{q-1}-t_s^{q-1})+\cdots+h_{s-1}(t_{s-1}^{q-1}-t_s^{q-1})+F',
\end{equation}
where $F'$ is a homogeneous polynomial with $\deg_{t_i}(F')\leq q-2$ for $i=1,\ldots,s-1$ and 
$\deg(F')=d$. Let $d'$ be the degree of the polynomial
$F'(t_1,\ldots,t_{s-1},1)$. Consider the sets: 
\begin{eqnarray*}
Z_{F(t_1,\ldots,t_{s-1},1)}&=&\{(x_1,\ldots,x_{s-1},1)\in
(K^*)^{s-1}\times\{1\}\, \vert\,
F(x_1,\ldots,x_{s-1},1)=0\},\\
A_F&=&\{[x]\in X\, \vert\, F(x)=0\}.
\end{eqnarray*}
Notice that there is a bijection
$$
Z_{F(t_1,\ldots,t_{s-1},1)}\stackrel{\psi}{\longrightarrow} A_F,\ \ \ \ \ \ \
(x_1,\ldots,x_{s-1},1)\stackrel{\psi}{\longmapsto} [(x_1,\ldots,x_{s-1},1)].
$$
Indeed $\psi$ is clearly well defined and injective. To see that
$\psi$ is onto take a point $[x]$ in $A_F$ with $x=(x_1,\ldots,x_s)$.
As $F$ is homogeneous of degree $d$, form the equality 
$$
F(x_1/x_s,\ldots,x_{s-1}/x_s,1)=F(x_1,\ldots,x_s)/x_s^d=0,
$$
we get that $p=(x_1/x_s,\ldots,x_{s-1}/x_s,1)$ is a point in
$Z_{F(t_1,\ldots,t_{s-1},1)}$ and $\psi(p)=[x]$. Hence
$|A_F|=|Z_{F(t_1,\ldots,t_{s-1},1)}|$. Using 
Eq.~(\ref{march12-10}), we get
$Z_{F(t_1,\ldots,t_{s-1},1)}=Z_{F'(t_1,\ldots,t_{s-1},1)}$. We set  
$$
H=H(t_1,\ldots,t_{s-1})=F'(t_1,\ldots,t_{s-1},1)\ \mbox{ and }\
Z_H=\{x\in (K^*)^{s-1}\, \vert\, H(x)=0\}.
$$ 
The polynomial $H$
does not vanish on $(K^*)^{s-1}$. This follows from
Eq.~(\ref{march12-10}) and using that $F$ is homogeneous and that $F$
does not vanish on $X$. We may assume that $d'\geq 1$,
otherwise $Z_{F'(t_1,\ldots,t_{s-1},1)}=\emptyset$ and $|A_F|=0$.
Then, 
we can write $d'=k'(q-2)+\ell'$ for some integers $k'\geq 0$ and 
$1\leq\ell'\leq q-2$. Since $k\leq s-2$, by Lemma~\ref{feb21-10}, we
obtain that $k'\leq k$ and
\begin{equation}\label{march11-10}
-(q-1)^{s-1-k'}+\ell'(q-1)^{s-2-k'}\leq
-(q-1)^{s-1-k}+\ell(q-1)^{s-2-k}.
\end{equation}
Then, $k'\leq s-2$ and $H$ is a non-zero polynomial of degree $d'\geq 1$ in $s-1$
variables such that $\deg_{t_i}(H)\leq q-2$ for
$i=1,\ldots,s-1$. Therefore, applying
Theorem~\ref{maria-vila-bound} to $H$ and then using
Eq.~(\ref{march11-10}), we derive
\begin{eqnarray*}
|A_F|=|Z_H|&\leq&
(q-1)^{s-k'-2}((q-1)^{k'+1}-(q-1)+\ell')\\ 
&\leq&(q-1)^{s-k-2}((q-1)^{k+1}-(q-1)+\ell).
 \end{eqnarray*}
Since $F$ was an arbitrary homogeneous polynomial of degree $d$ such that $F$ does
not vanish on $X$ we obtain
$$
M:=\max\{|A_F|\colon F\in S_d;\, F\not\equiv 0\}\leq
(q-1)^{s-k-2}((q-1)^{k+1}-(q-1)+\ell),$$ 
where $F\not\equiv 0$ means that $F$ is not the zero function on $X$. 
We claim that 
$$
M=(q-1)^{s-k-2}((q-1)^{k+1}-(q-1)+\ell).
$$

Let $M_1$ be the expression in the right hand side. It suffices to
show that $M$ is bounded 
from below by $M_1$ or equivalently it suffices to exhibit a homogeneous
polynomial $F\not\equiv 0$ of degree $d$ with exactly $M_1$ roots in
$X$. Let $\beta$ be a generator of
the cyclic group $(K^*,\,\cdot\, )$. Consider the polynomial
$F=f_1f_2\cdots f_kg_\ell$, where
$f_1,\ldots,f_k,g_\ell$ are given by
\begin{eqnarray*}
f_1&=&(\beta t_1-t_2)(\beta^2 t_1-t_2)\cdots (\beta^{q-2} t_1-t_2),\\
f_2&=&(\beta t_1-t_3)(\beta^2 t_1-t_3)\cdots (\beta^{q-2} t_1-t_3),\\
\vdots&\vdots &\ \ \ \ \ \ \ \ \ \ \ \ \ \ \ \ \ \ \vdots\\
f_k&=&(\beta t_1-t_{k+1})(\beta^2 t_1-t_{k+1})\cdots (\beta^{q-2}
t_1-t_{k+1}),\\
g_\ell&=&(\beta t_1-t_{k+2})(\beta^2 t_1-t_{k+2})\cdots (\beta^{\ell}
t_1-t_{k+2}).
\end{eqnarray*}

Now, the roots of $F$ in $X$ are in one to one correspondence with the
union of the sets: 
$$
\begin{array}{c}
\{1\}\times \{ \beta^i \}_{i=1}^{q-2} \times (K^*)^{s-2},\\
\{1\}\times \{1\} \times \{ \beta^i \}_{i=1}^{q-2} \times
(K^*)^{s-3},\\
\vdots\\
\{1\}\times \cdots \times \{1\} \times \{ \beta^i \}_{i=1}^{q-2} \times
(K^*)^{s-(k+1)},\\
\{1\}\times \cdots \times \{1\} \times \{ \beta^i \}_{i=1}^{\ell} \times
(K^*)^{s-(k+2)}.
\end{array}
$$
Therefore the number of zeros of $F$ in $X$ is given by
\begin{eqnarray*}
|A_F|&=&(q-2)(q-1)^{s-2}+(q-2)(q-1)^{s-3}+\cdots+
(q-2)(q-1)^{s-(k+1)}+\ell(q-1)^{s-(k+2)}\\
&=&(q-1)^{s-(k+2)}\left[(q-2)(q-1)^k+\cdots +(q-2)(q-1)+\ell\right]\\
&=&(q-1)^{s-(k+2)}\left[(q-2)(q-1)((q-1)^{k-1}+\cdots
+1)+\ell\right]\\
&=&(q-1)^{s-(k+2)}\left[(q-2)(q-1)
\left(\frac{(q-1)^k-1}{q-2}\right)+\ell\right]\\
&=&(q-1)^{s-(k+2)}\left[(q-1)^{k+1}-(q-1)+\ell\right],
\end{eqnarray*}
as required. Thus $M=M_1$ and the claim is proved. Therefore
\begin{eqnarray*}
\delta_d&=&\min\{\|{\rm ev}_d(F)\|
\colon {\rm ev}_d(F)\neq 0; F\in S_d\}=|X|-\max\{|A_F|\colon F\in
S_d;\, F\not\equiv 0\}\\ 
&=&(q-1)^{s-1}-\left((q-1)^{s-k-2}((q-1)^{k+1}-(q-1)+\ell)\right)\\
&=&(q-1)^{s-k-2}((q-1)-\ell),
\end{eqnarray*}
where $\|{\rm ev}_d(F)\|$ is the number of non-zero
entries of ${\rm ev}_d(F)$. This completes the proof of the case 
$1\leq d\leq (q-2)(s-1)-1$. Next we consider the case $d\geq
(q-2)(s-1)$. By the Singleton bound we readily get that $\delta_d=1$ 
for $d\geq {\rm reg}(S/I(X))$. Hence, applying
Proposition~\ref{ci-summary}, we get $\delta_d=1$ for $d\geq
(s-1)(q-2)$.
\end{proof}

The next proposition is an immediate consequence of our result.
Recall that a linear code is called
{\it maximum distance  separable\/} (MDS for short) if equality holds
in the Singleton bound. 

\begin{proposition}{\rm\cite{GRH,algcodes}}\label{minimum-distance-p1-p2} 
If $X$ is a projective torus in $\mathbb{P}^1$, then $C_X(d)$ is an
MDS code and its minimum distance is given by
$$
\delta_d=\left\{\hspace{-1mm}\begin{array}{cll}
q-1-d&\mbox{if}&1\leq d\leq q-3,\\
1&\mbox{if}&d\geq q-2.
\end{array}
 \right.
$$
If $X$ is a projective torus 
in $\mathbb{P}^2$, then the minimum distance of $C_X(d)$ is given by
$$
\delta_d = \left\{\begin{array}{cll}
(q-1)^2- d(q-1)&\mbox{if}&1\leq d \leq q-2,\\
2q -d-3&\mbox{if}&q-1\leq d \leq 2q-5,\\
1&\mbox{if}& d\geq 2q-4.
\end{array}\right.
$$
\end{proposition}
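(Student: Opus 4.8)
The plan is to deduce both statements directly from Theorem~\ref{maria-vila-hiram-eliseo} by setting $s=2$ for the torus in $\mathbb{P}^1$ and $s=3$ for the torus in $\mathbb{P}^2$, and then rewriting the output of that theorem in the variable $d$ alone.

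For $X\subset\mathbb{P}^1$ I would first note that $s=2$ gives the threshold $(q-2)(s-1)-1=q-3$. When $1\le d\le q-3$ the only way to write $d=k(q-2)+\ell$ with $k\ge 0$ and $1\le\ell\le q-2$ is $k=0$, $\ell=d$, so Theorem~\ref{maria-vila-hiram-eliseo} yields $\delta_d=(q-1)^{s-(k+2)}(q-1-\ell)=(q-1)^0(q-1-d)=q-1-d$; and for $d\ge q-2=(q-2)(s-1)$ it yields $\delta_d=1$. For the MDS claim I would compute $\dim_KC_X(d)$ from the Hilbert series $F_{\mathbb T}(t)=(1-t^{q-1})/(1-t)^2$ of Proposition~\ref{ci-summary} (equivalently from the Corollary with $s=2$): one gets $\dim_KC_X(d)=d+1$ for $1\le d\le q-2$ and $\dim_KC_X(d)=q-1=|X|$ for $d\ge q-2$. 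Plugging these into the Singleton bound $|X|-\dim_KC_X(d)+1$ gives exactly $q-1-d$ for $1\le d\le q-3$ and exactly $1$ for $d\ge q-2$, matching the minimum distance in every degree, so $C_X(d)$ is MDS.

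For $X\subset\mathbb{P}^2$ I would set $s=3$, so the threshold is $(q-2)(s-1)-1=2q-5$ and $(q-2)(s-1)=2q-4$. For $d\ge 2q-4$ Theorem~\ref{maria-vila-hiram-eliseo} gives $\delta_d=1$ immediately. For $1\le d\le 2q-5$ I would split on the value of $k$ in $d=k(q-2)+\ell$: if $1\le d\le q-2$ then $k=0$, $\ell=d$ and $\delta_d=(q-1)^{s-(k+2)}(q-1-\ell)=(q-1)(q-1-d)=(q-1)^2-d(q-1)$; if $q-1\le d\le 2q-5$ then necessarily $k=1$ and $\ell=d-(q-2)$ with $1\le\ell\le q-3$, so $\delta_d=(q-1)^0(q-1-\ell)=2q-d-3$. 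Assembling the three ranges reproduces the displayed formula.

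Everything here is elementary arithmetic once Theorem~\ref{maria-vila-hiram-eliseo} is granted; the only place warranting a moment's attention is verifying, on each subinterval of $d$, which pair $(k,\ell)$ is forced by the constraints $k\ge 0$ and $1\le\ell\le q-2$ --- for instance that $d=q-1$ in $\mathbb{P}^2$ corresponds to $(k,\ell)=(1,1)$ and not $(0,q-1)$ --- and, for the MDS assertion, correctly reading off the two values of $\dim_KC_X(d)$ from the Hilbert series. There is no real obstacle to overcome.
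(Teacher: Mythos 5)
Your proposal is correct and follows exactly the route the paper intends: the paper states Proposition~\ref{minimum-distance-p1-p2} as an immediate consequence of Theorem~\ref{maria-vila-hiram-eliseo} and omits the details, which are precisely the case analysis on $(k,\ell)$ for $s=2,3$ and the Hilbert-series computation of $\dim_KC_X(d)$ that you carry out. Your arithmetic checks out in every subinterval, including the MDS verification for $\mathbb{P}^1$.
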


Parameterized  codes arising from complete
bipartite graphs have been studied in \cite{GR}. In this case one can
use Theorem~\ref{maria-vila-hiram-eliseo} and the next result to  
compute the minimum distance.

\begin{theorem}{\cite{GR}} Let $\mathcal{K}_{k,\ell}$ be a complete
bipartite graph, let $X$ 
be the toric set parameterized by the edges of 
$\mathcal{K}_{k,\ell}$, and let $X_1$ and
$X_2$ be the projective torus of dimension 
$\ell-1$ and $k-1$ respectively. Then, the length, dimension and
minimum distance of $C_X(d)$ are equal to 
$$(q-1)^{k+\ell-2},\ \ H_{X_1}(d)H_{X_2}(d),\ \mbox{ and } \ \ 
\delta_1\delta_2
$$ respectively, where
$\delta_i$ is the minimum distance of $C_{X_i}(d)$. 
\end{theorem}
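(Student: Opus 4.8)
The plan is to identify $C_X(d)$, after a natural reindexing of its coordinates, with the tensor (product) code $C_{X_2}(d)\otimes_KC_{X_1}(d)$, and then to read the three parameters off those of the two factor codes. Write the colour classes of $\mathcal K_{k,\ell}$ as $\{1,\dots,k\}$ and $\{1,\dots,\ell\}$, so its $s=k\ell$ edges are the pairs $(i,j)$, and $X$ is parameterized by the monomials $y_iz_j$ in the variables $y_1,\dots,y_k,z_1,\dots,z_\ell$ attached to the vertices. Relabel the variables of $S=K[t_1,\dots,t_s]$ accordingly as $t_{ij}$ ($1\le i\le k$, $1\le j\le\ell$), so that $X=\{[(y_iz_j)_{i,j}]\in\mathbb P^{k\ell-1}\,\vert\,y_i,z_j\in K^*\}$. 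Let $S_1=K[t_1,\dots,t_\ell]$ and $S_2=K[u_1,\dots,u_k]$ be the coordinate rings of $X_1\subset\mathbb P^{\ell-1}$ and $X_2\subset\mathbb P^{k-1}$, with evaluation maps ${\rm ev}_d^{(1)}$ and ${\rm ev}_d^{(2)}$. First I would check that the map $[(y_iz_j)_{i,j}]\mapsto([(y_1,\dots,y_k)],[(z_1,\dots,z_\ell)])$ is a well defined bijection $X\to X_2\times X_1$: two vectors $(y,z)$ and $(y',z')$ determine the same point of $X$ if and only if $y'=ay$ and $z'=bz$ for some $a,b\in K^*$, which is exactly the condition that they give the same pair in $X_2\times X_1$. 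In particular $|X|=|X_2|\,|X_1|=(q-1)^{k-1}(q-1)^{\ell-1}=(q-1)^{k+\ell-2}$ by Proposition~\ref{ci-summary}, which is the claimed length, and I identify $K^{|X|}\cong K^{|X_2|}\otimes_KK^{|X_1|}$ through this bijection.

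The key step is a factorization of monomial evaluations. Normalizing on $X$ by $t_{11}^d$, on $X_2$ by $u_1^d$, and on $X_1$ by $t_1^d$, I claim that for every degree $d$ monomial $t^\alpha=\prod_{i,j}t_{ij}^{\alpha_{ij}}$ one has
$$
{\rm ev}_d(t^\alpha)={\rm ev}_d^{(2)}(u^{a})\otimes{\rm ev}_d^{(1)}(t^{b}),
$$
where $a_i=\sum_j\alpha_{ij}$ are the row sums and $b_j=\sum_i\alpha_{ij}$ the column sums (so $|a|=|b|=d$): the value of $t^\alpha$ at $(y_iz_j)_{i,j}$ equals $\prod_iy_i^{a_i}\prod_jz_j^{b_j}$, and dividing by $(y_1z_1)^d$ splits as $(\prod_iy_i^{a_i}/y_1^d)(\prod_jz_j^{b_j}/z_1^d)$, which is precisely the corresponding product of coordinates under the identification above. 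Conversely, for any $a\in\mathbb{N}^{k}$, $b\in\mathbb{N}^{\ell}$ with $|a|=|b|=d$ there is a nonnegative integer matrix with those row and column sums (fill it greedily, e.g. by the northwest–corner rule), i.e. a degree $d$ monomial $t^\alpha$ realizing the pair $(a,b)$. Since the $u^a$ span $(S_2)_d$, the $t^b$ span $(S_1)_d$, and ${\rm ev}_d$ is linear, the first fact gives $C_X(d)\subseteq C_{X_2}(d)\otimes C_{X_1}(d)$ while the second shows every generator ${\rm ev}_d^{(2)}(u^a)\otimes{\rm ev}_d^{(1)}(t^b)$ of the right–hand side lies in $C_X(d)$; hence $C_X(d)=C_{X_2}(d)\otimes_KC_{X_1}(d)$ as subspaces of $K^{|X_2|}\otimes_KK^{|X_1|}$.

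It remains to extract the parameters. Dimension is multiplicative under $\otimes_K$, so $\dim_KC_X(d)=\dim_KC_{X_2}(d)\cdot\dim_KC_{X_1}(d)=H_{X_2}(d)H_{X_1}(d)$. For the minimum distance I would invoke the classical fact that a product code $C\otimes D$ has minimum distance $\delta(C)\delta(D)$: writing $0\neq M\in C\otimes D$ as a matrix, every column lies in $C$ and every row in $D$; a nonzero row has at least $\delta(D)$ nonzero entries, so $M$ has at least $\delta(D)$ nonzero columns, each with at least $\delta(C)$ nonzero entries, giving $\|M\|\ge\delta(C)\delta(D)$, while $c\otimes d$ with $c,d$ of minimal weight attains this bound. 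Taking $C=C_{X_2}(d)$ and $D=C_{X_1}(d)$ yields that the minimum distance of $C_X(d)$ is $\delta_2\delta_1$, as asserted; combined with Theorem~\ref{maria-vila-hiram-eliseo} this becomes fully explicit. The only points needing care are the exact matching of the three projective normalizations so that $C_X(d)=C_{X_2}(d)\otimes C_{X_1}(d)$ holds on the nose — any other choice of normalizing forms only changes $C_X(d)$ by a diagonal rescaling of coordinates, which preserves all three parameters — and the margin–realizability step that upgrades the inclusion to an equality; everything else is routine bookkeeping.
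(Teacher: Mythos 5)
Your argument is correct. Note that the paper itself gives no proof of this statement --- it is quoted from the reference [GR], where it is established by essentially the same route you take: identifying $X$ with $X_2\times X_1$ and $C_X(d)$ with the tensor (product) code $C_{X_2}(d)\otimes_K C_{X_1}(d)$, then using multiplicativity of dimension and of minimum distance for product codes. You correctly handle the two points where such an argument could go wrong: the bijection $X\to X_2\times X_1$ (via the observation that $y_i'z_j'=\lambda y_iz_j$ for all $i,j$ forces $y'=ay$, $z'=bz$) and the margin-realizability step (existence of a nonnegative integer matrix with prescribed row and column sums of equal total), which is what upgrades the inclusion $C_X(d)\subseteq C_{X_2}(d)\otimes C_{X_1}(d)$ to an equality; without it one would only get the upper bounds on dimension and the lower bound on distance. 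The compatibility of the three normalizations ($t_{11}^d$ versus $u_1^d$ and $t_1^d$) is also checked, so nothing is missing.
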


\section{Complete intersection ideals of parameterized sets of
clutters}\label{ci-section}

We continue to use the notation and definitions used in the
introduction and in the preliminaries. In this section we
characterize the ideals $I(X)$ that are complete intersection when
$X$ arises from a clutter. Then, we show an optimal upper bound for the
regularity of $S/I(X)$. 

\begin{definition}\label{clutter-def}
A {\it clutter\/} $\mathcal{C}$ is a family $E$ of subsets of a
finite ground set $Y=\{y_1,\ldots,y_n\}$ such that if $f_1, f_2 \in
E$, then $f_1\not\subset f_2$. The ground set $Y$ is called the {\em
vertex set} 
of $\mathcal{C}$ and $E$ 
is called the {\em edge set} of $\mathcal{C}$, they are denoted by $V_\mathcal{C}$
and $E_\mathcal{C}$  respectively. 
\end{definition}

Clutters are special hypergraphs. One 
example of a clutter is a graph with the vertices and edges defined in the 
usual way for graphs. 

Let $\mathcal{C}$ be a clutter with vertex set
$V_\mathcal{C}=\{y_1,\ldots,y_n\}$ and let $f$ be an edge of
$\mathcal{C}$. The {\it characteristic vector\/} of $f$ is the vector
$v=\sum_{y_i\in f}e_i$, where
$e_i$ is the $i${\it th} unit vector in $\mathbb{R}^n$. Throughout
this section we assume that $v_1,\ldots,v_s$ is the set of all
characteristic vectors of the edges of $\mathcal{C}$. Recall that the
algebraic toric set parameterized  by 
$y^{v_1},\ldots,y^{v_s}$, denoted by $X$, is the set  
$$
X:=\{[(x_1^{v_{11}}\cdots x_n^{v_{1n}},\ldots,x_1^{v_{s1}}\cdots
x_n^{v_{sn}})]\in\mathbb{P}^{s-1}\vert\, x_i\in K^*\mbox{ for all }i\},
$$
where $v_i=(v_{i1},\ldots,v_{in})\in\mathbb{N}^n$ for $i=1,\ldots,s$.   

\begin{definition} If $a\in {\mathbb R}^s$, its {\it
support\/} is defined as ${\rm supp}(a)=\{i\, |\, a_i\neq 0\}$. 
Note that $a=a^+-a^-$, 
where $a^+$ and $a^-$ are two non-negative vectors 
with disjoint support called the {\it positive\/} and 
{\it negative\/} part of $a$ respectively. 
\end{definition}

\begin{lemma}{\cite[Lemma~3.4]{d-compl}}\label{jan6-10}
Let $\mathcal{C}$ be a clutter. If $f\neq 0$ is a homogeneous
polynomial of $I(X)$ of the form $t_i^b-t^c$ with $b\in\mathbb{N}$,
$c\in\mathbb{N}^s$ and $i\notin{\rm supp}(c)$, 
then $\deg(f)\geq q-1$. Moreover if $b=q-1$, then
$f=t_i^{q-1}-t_j^{q-1}$ for some $j\neq i$.
\end{lemma}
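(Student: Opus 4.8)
The plan is to translate the condition $f\in I(X)$ into a coordinatewise congruence modulo $q-1$, and then to use the clutter hypothesis to pin down the shape of $c$.

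First I would record the translation. Since $f=t_i^b-t^c$ is homogeneous and nonzero, its two monomials have the same degree, so $d:=\deg(f)=b=\sum_{j=1}^s c_j$, and $d\ge 1$ (otherwise $f=1-1=0$). Evaluating $f$ at a point $[(x^{v_1},\dots,x^{v_s})]$ of $X$ and using homogeneity, $f\in I(X)$ is equivalent to $x^{b v_i}=x^{\sum_j c_j v_j}$ for every $x\in (K^*)^n$; since $K^*$ is cyclic of order $q-1$, specializing all but one coordinate of $x$ shows this amounts to the coordinatewise congruence
$$
b\,v_i\equiv \sum_{j=1}^s c_j v_j \pmod{q-1}\quad\text{in }\mathbb{Z}^n.
$$
Writing $w:=\sum_j c_j v_j\in\mathbb{N}^n$, its $k$-th coordinate is $w_k=\sum_{j\,:\,k\in{\rm supp}(v_j)}c_j$ and $0\le w_k\le \sum_j c_j=d$. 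I would also recall that for a clutter the sets ${\rm supp}(v_1),\dots,{\rm supp}(v_s)$ are distinct and pairwise incomparable, so ${\rm supp}(v_i)\subseteq {\rm supp}(v_j)$ forces $j=i$.

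For the degree bound I would argue by contradiction: suppose $1\le d\le q-2$. Pick a vertex $k$ in the (nonempty) edge $v_i$. The $k$-th congruence reads $d\equiv w_k\pmod{q-1}$ with $d$ and $w_k$ both in $\{0,\dots,q-2\}$, so $w_k=d=\sum_j c_j$; hence $c_j=0$ whenever $k\notin {\rm supp}(v_j)$, i.e. ${\rm supp}(c)\subseteq\{j:k\in{\rm supp}(v_j)\}$. Intersecting over all vertices $k$ of the edge $v_i$ gives ${\rm supp}(c)\subseteq\{j:{\rm supp}(v_i)\subseteq{\rm supp}(v_j)\}=\{i\}$; since $i\notin{\rm supp}(c)$ this forces $c=0$, contradicting $\sum_j c_j=d\ge1$. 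Therefore $d\ge q-1$. Next, for the case $b=q-1$: now $b v_i\equiv 0$, so $w\equiv 0\pmod{q-1}$ coordinatewise, and with $0\le w_k\le q-1$ this gives $w_k\in\{0,q-1\}$ for every $k$. Put $T={\rm supp}(c)$ and $U=\bigcup_{j\in T}{\rm supp}(v_j)$. A vertex $k$ lies in $U$ exactly when $w_k>0$, so $w=(q-1)\chi_U$. Summing all coordinates, $\sum_{j\in T}c_j\,|{\rm supp}(v_j)|=\sum_k w_k=(q-1)|U|=|U|\sum_{j\in T}c_j$, hence $\sum_{j\in T}c_j\big(|U|-|{\rm supp}(v_j)|\big)=0$; each summand is $\ge 0$ (as ${\rm supp}(v_j)\subseteq U$) and $c_j>0$ for $j\in T$, so ${\rm supp}(v_j)=U$ for all $j\in T$. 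Since the $v_j$ are distinct, $|T|\le 1$, while $T\ne\emptyset$ because $\sum_j c_j=q-1$; writing $T=\{j_0\}$ we get $c_{j_0}=q-1$, so $t^c=t_{j_0}^{q-1}$ and $f=t_i^{q-1}-t_{j_0}^{q-1}$ with $j_0\ne i$ (because $c_i=0$).

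The main obstacle will be the translation step: one must be careful that $f\in I(X)$ really is equivalent to the \emph{coordinatewise} congruence mod $q-1$ (not merely to a single scalar congruence), and must keep every partial sum $w_k$ inside the window $[0,q-1]$ so that the congruences sharpen to equalities. Once that bookkeeping is in place, the clutter axiom supplies the combinatorial collapse of ${\rm supp}(c)$ to a single edge, which is what gives the last assertion.
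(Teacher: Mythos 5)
Your proof is correct and, for the degree bound, follows essentially the same route as the paper: specializing all but one coordinate to get the coordinatewise congruence $bv_i\equiv\sum_j c_jv_j\pmod{q-1}$, forcing equality because both sides lie in $[0,q-2]$, concluding ${\rm supp}(v_i)\subseteq{\rm supp}(v_j)$ for every $j\in{\rm supp}(c)$, and contradicting the clutter axiom. For the second assertion the paper only writes ``follows using similar arguments'' with a reference, so your coordinate-counting argument showing $w=(q-1)\chi_U$ and hence ${\rm supp}(v_j)=U$ for all $j\in{\rm supp}(c)$ is a welcome, correct completion of that omitted step.
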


\begin{proof} We may assume that $f=t_1^b-t_2^{c_2}\cdots t_r^{c_r}$, where $c_j\geq 1$ for all $j$ 
and $b=c_2+\cdots+c_r$. Then
\begin{equation}\label{jan7-10}
(x_1^{v_{11}}\cdots x_n^{v_{1n}})^b=(x_1^{v_{21}}\cdots
x_n^{v_{2n}})^{c_2}\cdots (x_1^{v_{r1}}\cdots
x_n^{v_{rn}})^{c_r}\ \mbox{ for all }\ (x_1,\ldots,x_n)\in (K^*)^n.
\end{equation}
We proceed by contradiction. Assume
that $b<q-1$. We claim that if $v_{1k}=1$ for some $1\leq k\leq
n$, then $v_{jk}=1$ for $j=2,\ldots,r$, otherwise if $v_{jk}=0$ for
some $j\geq 2$, then making $x_i=1$ for $i\neq k$ in
Eq.~(\ref{jan7-10}) we get $(x_k^{v_{1k}})^b=x_k^b=x_k^m$, where 
$m<b$. Then $x_k^{b-m}=1$ for $x_k\in K^*$. In  particular if $\beta$
is a generator of the cyclic group $(K^*,\,\cdot\, )$, then
$\beta^{b-m}=1$. Hence $b-m$ is a multiple of $q-1$ and consequently 
$b\geq q-1$, a contradiction. This completes the proof of the claim. 
Therefore ${\rm supp}(v_1)\subset{\rm supp}(v_j)$ for $j=2,\ldots,r$.
Since $\mathcal{C}$ is a clutter we get that $v_1=v_j$ for
$j=2,\ldots,r$, a contradiction because $v_1,\ldots,v_r$
are distinct. Thus $b\geq q-1$. The second part of the lemma follows 
using similar arguments (see \cite{d-compl}).
\end{proof}

A polynomial of the form $f=t^a-t^b$,
with $a,b\in\mathbb{N}^s$, is called a {\it binomial}  
of $S$. The monomials $t^a$ and $t^b$ are called the {\it terms\/} of $f$. An ideal generated 
by binomials is called a {\it binomial ideal\/}.   

\begin{theorem}\label{ci->canonical-form} Let $\mathcal{C}$ be a
clutter. 
If $I(X)$ is a
complete intersection, then
$$
I(X)=(t_1^{q-1}-t_s^{q-1},\ldots,t_{s-1}^{q-1}-t_s^{q-1}).
$$ 
\end{theorem}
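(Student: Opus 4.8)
The plan is to start from the known facts about $I(X)$: it is a binomial, Cohen--Macaulay lattice ideal of height $s-1$ (by \cite{algcodes}), and it contains the obvious ``torus'' binomials $t_i^{q-1}-t_s^{q-1}$ because every parameterizing monomial $y^{v_i}$ is squarefree of the same nature, so $(x^{v_i})^{q-1}=1$ on $(K^*)^n$. Hence the complete intersection ideal $J:=(t_1^{q-1}-t_s^{q-1},\ldots,t_{s-1}^{q-1}-t_s^{q-1})$ satisfies $J\subseteq I(X)$. Both $J$ and $I(X)$ are Cohen--Macaulay of height $s-1$ (for $J$ this is clear since it is a complete intersection on a regular sequence, or one can cite Proposition~\ref{ci-summary}), so to prove $J=I(X)$ it suffices to show they have the same degree, i.e. $\deg(S/J)=\deg(S/I(X))$; an inclusion of ideals defining Cohen--Macaulay quotients of the same dimension with equal multiplicity forces equality. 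By Proposition~\ref{ci-summary}(c), $\deg(S/J)=(q-1)^{s-1}$. So the task reduces to: if $I(X)$ is a complete intersection, then $\deg(S/I(X))=|X|=(q-1)^{s-1}$, equivalently $|X|=(q-1)^{s-1}$.

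The heart of the argument is therefore to exploit the complete intersection hypothesis on $I(X)$ directly. Since $I(X)$ is a binomial ideal that is a complete intersection, it is minimally generated by $s-1$ binomials $g_1,\ldots,g_{s-1}$ forming a regular sequence. The key structural input is Lemma~\ref{jan6-10}: any binomial $t_i^b-t^c$ in $I(X)$ with $i\notin\operatorname{supp}(c)$ has degree $\ge q-1$, and if $b=q-1$ it must be of the pure form $t_i^{q-1}-t_j^{q-1}$. I would argue that the minimal binomial generators of a complete intersection $I(X)$ can be chosen ``pure'', i.e. of the form $t_i^{a_i}-t_j^{a_j}$: because $S/I(X)$ is $1$-dimensional and $t_s$ is a nonzerodrop of parameters, each variable $t_i$ is invertible modulo the radical, and lattice-ideal / binomial-ideal structure (the lattice $L$ generated by $\{v_i-v_1\}$ together with the saturation coming from $K^*$) forces each generator to pair up two variables. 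Then each generator contributes a factor to the Hilbert series: if $g_j$ has degree $d_j$, the complete intersection Hilbert series is $\prod_{j=1}^{s-1}(1-t^{d_j})/(1-t)^s$, so $\deg(S/I(X))=\prod_{j=1}^{s-1}d_j$. By Lemma~\ref{jan6-10} each $d_j\ge q-1$, hence $\deg(S/I(X))\ge (q-1)^{s-1}$. On the other hand $|X|$ is a subgroup of the projective torus $\mathbb{T}$ of order $(q-1)^{s-1}$ (since $X$ is a subgroup under componentwise multiplication and sits inside $\mathbb T$ after composing the parameterization), so $\deg(S/I(X))=|X|\le (q-1)^{s-1}$. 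Combining the two inequalities gives $\deg(S/I(X))=(q-1)^{s-1}$, each $d_j=q-1$, and by the second part of Lemma~\ref{jan6-10} each generator is of the form $t_i^{q-1}-t_j^{q-1}$.

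Finally I would clean up the combinatorics: having $s-1$ binomials $t_i^{q-1}-t_j^{q-1}$ generating a height $s-1$ ideal means the associated graph on vertices $\{t_1,\ldots,t_s\}$ with an edge $\{i,j\}$ for each generator is connected (a forest with $s-1$ edges on $s$ vertices, hence a spanning tree), so the ideal they generate equals $(\{t_i^{q-1}-t_j^{q-1}\}_{i<j})$, which after a trivial change of spanning tree equals $J=(t_1^{q-1}-t_s^{q-1},\ldots,t_{s-1}^{q-1}-t_s^{q-1})$. Since this $J$ is contained in $I(X)$ and has the same degree, $I(X)=J$, as claimed.

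The step I expect to be the main obstacle is justifying that the minimal binomial generators of the complete intersection $I(X)$ may be taken to be pure binomials $t_i^{a}-t_j^{b}$, rather than general binomials $t^{a}-t^{c}$ with multi-variable support; Lemma~\ref{jan6-10} is tailored to the ``one variable on one side'' case, so the argument must either reduce to that case (using that $I(X)$ is a lattice ideal whose lattice is generated by differences $v_i-v_j$ and is saturated, so that after a suitable choice of generating set each generator involves exactly two coordinates) or else extend the degree bound of Lemma~\ref{jan6-10} to arbitrary binomials in $I(X)$. I would handle this by invoking the lattice-ideal description from \cite{algcodes} together with the fact that for a complete intersection lattice ideal the defining lattice has a ``nice'' (e.g. diagonal after base change) generating set, and then check that positivity of the grading forces the pure form; this is where the clutter hypothesis (the $v_i$ are $0/1$ vectors, no containments) does the real work.
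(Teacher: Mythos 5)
Your overall skeleton is sound in its outer layers: the inclusion $J:=(t_1^{q-1}-t_s^{q-1},\ldots,t_{s-1}^{q-1}-t_s^{q-1})\subseteq I(X)$, the reduction of $J=I(X)$ to the equality of degrees $\deg(S/I(X))=(q-1)^{s-1}$, the product formula $\deg(S/I(X))=\prod_j d_j$ for a graded complete intersection, and the upper bound $|X|\leq(q-1)^{s-1}$ coming from $X\subseteq\mathbb{T}$ are all correct. But the step you yourself single out as the main obstacle is a genuine gap, and the justification you sketch for it does not work. Lemma~\ref{jan6-10} only bounds the degree of binomials in $I(X)$ having at least one term equal to a pure power $t_i^b$; it says nothing about binomials $t^a-t^c$ both of whose terms involve several variables, and $I(X)$ typically contains such binomials in very low degree (for the clutter given by a $4$-cycle one has $t_1t_3-t_2t_4\in I(X)$ already in degree $2$). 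Hence the inequality $d_j\geq q-1$ for every minimal generator, and with it $\prod_j d_j\geq(q-1)^{s-1}$, cannot be asserted without first proving that the generators may be taken with a pure-power term. Your proposed reasons for that purity claim (``each variable is invertible modulo the radical, so each generator pairs up two variables''; ``a complete intersection lattice ideal has a diagonal lattice after base change'') are simply not true: complete intersection lattice ideals are routinely minimally generated by binomials supported on more than two variables, and invertibility of the $t_i$ modulo $I(X)$ imposes no constraint on the shape of a minimal generating set. So the proof has a hole exactly at its central step.

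It is worth noting how the paper gets around this, because it never proves that all generators are pure or of degree $\geq q-1$. It takes a minimal binomial generating set $\{h_1,\ldots,h_{s-1}\}$ and splits it into: (i) generators of degree $q-1$ possessing a pure-power term, which by Lemma~\ref{jan6-10} must be of the form $t_i^{q-1}-t_j^{q-1}$; (ii) generators with a pure-power term of degree $>q-1$; and (iii) generators with no pure-power term (using $(I(X):t_i)=I(X)$ and minimality). It then writes the known element $t_i^{q-1}-t_s^{q-1}$ of $I(X)$ as a combination of the $h_\ell$ and compares monomial supports: pure powers $\lambda t_j^{q-1}$ can only arise from group (i), which yields $J\subseteq(h_1,\ldots,h_k)$ with $k$ the size of group (i); since $J$ has height $s-1$ and $(h_1,\ldots,h_k)$ has height at most $k$, this forces $k=s-1$, i.e.\ every generator is of type (i) after all, and the two ideals coincide. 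If you wish to keep your degree-counting reduction, you must import this support-comparison and height argument (or an equivalent substitute) to establish purity of the generators; as written, your argument does not close.
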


\begin{proof} According to \cite[Theorem~2.1]{algcodes} the vanishing ideal $I(X)$
is a binomial ideal. Notice that $I(X)$ has height $s-1$.
Indeed, let $[P]$ be an arbitrary point in $X$, with $P=(\alpha_1,\ldots,\alpha_s)$, and let
$I_{[P]}$ be the ideal generated by the homogeneous polynomials of $S$ that vanish 
at $[P]$. Then 
\begin{equation*}
I_{[P]}=(\alpha_1t_2-\alpha_2t_1,\alpha_1t_3-\alpha_3t_1,\ldots,
\alpha_1t_s-\alpha_st_1)\ \mbox{ and }\ I(X)=\bigcap_{[P]\in X}I_{[P]}
\end{equation*}
and the later is the primary decomposition of $I(X)$, because $I_{[P]}$ is
a prime ideal of $S$ for any $[P]\in X$. As $I_{[P]}$
has height $s-1$ for any $[P]\in X$, 
we get that the height of $I(X)$ is $s-1$. As $I(X)$ is a complete intersection of height $s-1$, 
there is a minimal set 
$$\mathcal{B}=\{h_1,\ldots,h_{s-1}\}$$  
of homogeneous binomials that generate the ideal $I(X)$. The set
$\mathcal{B}$ is minimal in the sense that
$(\mathcal{B}\setminus\{h_i\})\subsetneq I(X)$ for all $i$. We may assume that
$h_1,\ldots,h_m$ are the binomials of $\mathcal{B}$ that contain a
term of the form $t_i^{c_i}$. By Lemma~\ref{jan6-10} we have that
$\deg(h_i)\geq q-1$ for $i=1,\ldots,m$. Thus we may assume 
that $h_1,\ldots,h_k$ are the binomials of $\mathcal{B}$ of degree
$q-1$ that contain a term of the form $t_i^{q-1}$ and that
$h_{k+1},\ldots,h_m$ have degree greater than $q-1$. By
Lemma~\ref{jan6-10} the binomials $h_1,\ldots,h_k$ have the form
$t_i^{q-1}-t_j^{q-1}$. Notice that $(I(X)\colon t_i)=I(X)$ for
all $i$, this equality follows readily using that $t_i$ does not
vanish at any point of $X$. Hence, by the minimality of $\mathcal{B}$,
the binomials $h_{m+1},\ldots,h_{s-1}$ have both
of their terms not in the set $\{t_1^{a_1},\ldots,t_s^{a_s}\vert\,
a_i\geq 1\mbox{ for all }i\}$. Since $t_i^{q-1}-t_s^{q-1}$ is in $I(X)$
for $i=1,\ldots,s-1$, we can write 
\[
t_i^{q-1}-t_s^{q-1}=\sum_{\ell=1}^k\lambda_{\ell}h_\ell+
\sum_{\ell=k+1}^m\mu_{\ell}h_\ell+
\sum_{\ell=m+1}^{s-1}\theta_{\ell}h_\ell\ \ \ \ \
(\lambda_\ell,\,\mu_\ell,\,\theta_\ell\in S).
\]
As $h_1,\ldots,h_{s-1}$ are homogeneous binomials we can rewrite this
equality as:
\[
t_i^{q-1}-t_s^{q-1}=\sum_{{\ell}=1}^k\lambda_{\ell}'h_{\ell}+
\sum_{{\ell}=m+1}^{s-1}\theta_{\ell}'h_{\ell},
\]
where $\lambda_{\ell}'\in K$ for ${\ell}=1,\ldots,k$ and for each $m+1\leq
{\ell}\leq s-1$ either $\theta_{\ell}'=0$ and $\deg(h_{\ell})>q-1$ 
or $\deg(h_{\ell})\leq q-1$ and
$\deg(h_{\ell})+\deg(\theta_{\ell}')=q-1$. Then 
\[
t_i^{q-1}-t_s^{q-1}-\sum_{{\ell}=1}^k\lambda_{\ell}'h_{\ell}=
\sum_{{\ell}=m+1}^{s-1}\theta_{\ell}'h_{\ell}.
\]
The left hand side of this equality has to be zero, otherwise a
non-zero monomial that occur in the left hand side will have to occur
in the right hand side which is impossible because monomials occurring
on the left have the form ${\lambda}t_j^{q-1}$, $\lambda\in K$, 
and monomials occurring on the
right are never of this form. Hence we get the inclusion
$$
(t_1^{q-1}-t_s^{q-1},\ldots,t_{s-1}^{q-1}-t_s^{q-1})\subset
(h_1,\ldots,h_k).
$$
Since the height of $(h_1,\ldots,h_k)$ is at most $k$, we get $s-1\leq k$.
Consequently $k=s-1$. Thus the inclusion above is an equality as
required.
\end{proof}

\begin{corollary}\label{ci-characterization}
Let $\mathcal{C}$ be a clutter with $s$ edges and let 
$\mathbb{T}=\{[(x_1,\ldots,x_s)]\in\mathbb{P}^{s-1}\vert\, x_i\in K^*\}$ be a
projective torus. The following are equivalent:
\begin{enumerate}
\item[($\mathrm{c}_1$)] $I(X)$ is a complete intersection.
\item[($\mathrm{c}_2$)]
$I(X)=(t_1^{q-1}-t_s^{q-1},\ldots,t_{s-1}^{q-1}-t_s^{q-1})$.
\item[($\mathrm{c}_3$)] $X=\mathbb{T}\subset\mathbb{P}^{s-1}$.
\end{enumerate}
\end{corollary}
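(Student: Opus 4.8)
The plan is to prove the equivalence by establishing the cycle of implications $(\mathrm{c}_1) \Rightarrow (\mathrm{c}_2) \Rightarrow (\mathrm{c}_3) \Rightarrow (\mathrm{c}_1)$. The implication $(\mathrm{c}_1) \Rightarrow (\mathrm{c}_2)$ is exactly the content of Theorem~\ref{ci->canonical-form}, so there is nothing new to do there. The implication $(\mathrm{c}_2) \Rightarrow (\mathrm{c}_1)$ is immediate, since the ideal $(t_1^{q-1}-t_s^{q-1},\ldots,t_{s-1}^{q-1}-t_s^{q-1})$ is manifestly generated by $s-1$ homogeneous polynomials, and we already know (from the height computation in the proof of Theorem~\ref{ci->canonical-form}, or from Proposition~\ref{ci-summary}) that $I(X)$ has height $s-1$; an ideal of height $s-1$ generated by $s-1$ elements is by definition a complete intersection. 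So the real work is to link up with $(\mathrm{c}_3)$.

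For $(\mathrm{c}_2) \Rightarrow (\mathrm{c}_3)$, I would argue as follows. By Proposition~\ref{ci-summary}(a) we have $I(\mathbb{T}) = (\{t_i^{q-1}-t_1^{q-1}\}_{i=2}^{s})$, and this ideal visibly equals $(t_1^{q-1}-t_s^{q-1},\ldots,t_{s-1}^{q-1}-t_s^{q-1})$ (the two generating sets span the same space of differences). So $(\mathrm{c}_2)$ says precisely $I(X) = I(\mathbb{T})$. Since $X$ and $\mathbb{T}$ are both finite sets of points in $\mathbb{P}^{s-1}$, and the vanishing ideal determines the set (e.g. $X = \bigcap_{[P]\in X} I_{[P]}$ corresponds to the associated primes, or simply because $X = V(I(X))$ for a finite set of $K$-points — here one should note $X\subseteq \mathbb{T}$ always holds, since each coordinate $x_i^{v_{i1}}\cdots x_n^{v_{in}}$ is a nonzero element of $K$, so the points of $X$ have all coordinates in $K^*$), equality of ideals forces $X = \mathbb{T}$. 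Concretely: $X \subseteq \mathbb{T}$, hence $I(\mathbb{T}) \subseteq I(X)$; the hypothesis gives the reverse inclusion $I(X) = I(\mathbb{T})$; and since $\mathbb{T}$ is the zero set of $I(\mathbb{T})$ in $\mathbb{P}^{s-1}$ while $X$ is the zero set of $I(X)$, we get $\mathbb{T} = V(I(\mathbb{T})) = V(I(X)) \supseteq X \supseteq$ ... more cleanly, use that $|X| = \deg(S/I(X)) = \deg(S/I(\mathbb{T})) = |\mathbb{T}|$ together with $X\subseteq\mathbb{T}$ to conclude $X = \mathbb{T}$.

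Finally $(\mathrm{c}_3) \Rightarrow (\mathrm{c}_1)$: if $X = \mathbb{T}$, then $I(X) = I(\mathbb{T}) = (\{t_i^{q-1}-t_1^{q-1}\}_{i=2}^{s})$ by Proposition~\ref{ci-summary}(a), which is generated by $s-1$ homogeneous polynomials and has height $s-1$, hence is a complete intersection; this also re-proves $(\mathrm{c}_3)\Rightarrow(\mathrm{c}_2)$. I do not anticipate a serious obstacle: all the heavy lifting is in Theorem~\ref{ci->canonical-form}, and the remaining implications are bookkeeping with the facts that $X\subseteq\mathbb{T}$, that $\deg(S/I(X)) = |X|$, and Proposition~\ref{ci-summary}. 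The only point requiring a line of care is the observation that $X \subseteq \mathbb{T}$ — i.e. that the parameterization lands in the projective torus — which follows because the defining exponents $v_i$ are characteristic vectors (in particular nonzero, so no coordinate of a point of $X$ is $0$ when the $x_j \in K^*$). With that in hand, the degree/length comparison closes the argument.
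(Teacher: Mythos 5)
Your proposal is correct and follows essentially the same cycle of implications as the paper: $(\mathrm{c}_1)\Rightarrow(\mathrm{c}_2)$ via Theorem~\ref{ci->canonical-form}, $(\mathrm{c}_2)\Rightarrow(\mathrm{c}_3)$ via $I(X)=I(\mathbb{T})$, and $(\mathrm{c}_3)\Rightarrow(\mathrm{c}_1)$ via Proposition~\ref{ci-summary}. The only difference is that where the paper cites an external lemma for the step that equality of vanishing ideals of projective varieties forces $X=\mathbb{T}$, you give a self-contained argument using $X\subseteq\mathbb{T}$ together with $|X|=\deg(S/I(X))$, which is a perfectly valid (and arguably cleaner) way to close that step.
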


\begin{proof} ($\mathrm{c}_1$)$\Rightarrow$($\mathrm{c}_2$): It
follows at once from Theorem \ref{ci->canonical-form}.
($\mathrm{c}_2$)$\Rightarrow$($\mathrm{c}_3$): By
Proposition~\ref{ci-summary} one has 
$I(X)=I(\mathbb{T})=(\{t_i^{q-1}-t_s^{q-1}\}_{i=1}^{s-1})$. As
$X$ and $\mathbb{T}$ are both projective varieties, we get that
$X=\mathbb{T}$ (see \cite[Lemma 4.2]{algcodes} for details). 
($\mathrm{c}_3$)$\Rightarrow$($\mathrm{c}_1$): It
follows at once from Proposition \ref{ci-summary}.
\end{proof}

The next result shows that the regularity of complete intersections
associated to clutters provide an optimal bound for the regularity 
of $S/I(X)$.

\begin{proposition}\label{main-regularity-ff} ${\rm reg}(S/I(X))\leq (q-2)(s-1)$, with equality
if $I(X)$ is a complete intersection associated to a clutter with $s$
edges. 
\end{proposition}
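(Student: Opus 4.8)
The plan is to exploit the elementary containment $X\subseteq\mathbb{T}$, where $\mathbb{T}=\{[(x_1,\ldots,x_s)]\in\mathbb{P}^{s-1}\,\vert\, x_i\in K^*\}$ is the projective torus in $\mathbb{P}^{s-1}$. Indeed, every coordinate of a point of $X$ is a product of elements of the multiplicative group $K^*$, hence lies in $K^*$, so $X\subseteq\mathbb{T}$ and therefore $I(\mathbb{T})\subseteq I(X)$. First I would recall, as in Section~\ref{prelim-invariants-of-I}, that $t_s$ is a nonzerodivisor on $S/I(X)$: the associated primes of $I(X)$ are the ideals $I_{[P]}$ with $[P]\in X$, and $t_s\notin I_{[P]}$ since the last coordinate of $P$ is nonzero; likewise $t_s$ is a nonzerodivisor on $S/I(\mathbb{T})$ by Proposition~\ref{ci-summary}(a). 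Since $S/I(X)$ and $S/I(\mathbb{T})$ are Cohen--Macaulay of dimension $1$, the discussion of the Hilbert series in Section~\ref{prelim-invariants-of-I} identifies ${\rm reg}(S/I(X))$ with the largest degree $i$ for which the Artinian algebra $A:=S/(I(X),t_s)$ is nonzero, and ${\rm reg}(S/I(\mathbb{T}))$ with the largest degree $i$ for which $B:=S/(I(\mathbb{T}),t_s)$ is nonzero.

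Next I would observe that the inclusion $(I(\mathbb{T}),t_s)\subseteq(I(X),t_s)$ yields a graded surjection $B\twoheadrightarrow A$, so that $\dim_K A_i\leq\dim_K B_i$ for all $i$; in particular $A_i=0$ whenever $B_i=0$. By Proposition~\ref{ci-summary}(b), $F_\mathbb{T}(t)=(1-t^{q-1})^{s-1}/(1-t)^s=(1+t+\cdots+t^{q-2})^{s-1}/(1-t)$, hence $\sum_i(\dim_K B_i)t^i=(1+t+\cdots+t^{q-2})^{s-1}$, a polynomial of degree $(q-2)(s-1)$. Thus $B_i=0$ for every $i>(q-2)(s-1)$, and therefore $A_i=0$ for every such $i$ as well. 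Consequently ${\rm reg}(S/I(X))=\max\{i\,:\,A_i\neq 0\}\leq(q-2)(s-1)$, which is the claimed bound.

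For the equality assertion, suppose $I(X)$ is a complete intersection associated to a clutter with $s$ edges. By Corollary~\ref{ci-characterization} this forces $X=\mathbb{T}\subseteq\mathbb{P}^{s-1}$, so $I(X)=I(\mathbb{T})$, and Proposition~\ref{ci-summary}(c) then gives ${\rm reg}(S/I(X))={\rm reg}(S/I(\mathbb{T}))=(q-2)(s-1)$.

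I do not expect a serious obstacle here: the argument reduces to the containment $X\subseteq\mathbb{T}$, the behaviour of the regularity index under reduction modulo the linear nonzerodivisor $t_s$, and the comparison $\dim_K A_i\leq\dim_K B_i$ coming from a surjection of Artinian algebras. The only point requiring a line of care is that $t_s$ is genuinely a nonzerodivisor on $S/I(X)$, so that passing to $S/(I(X),t_s)$ computes the regularity; but this is already part of the setup recalled in the preliminaries (it rests on the Cohen--Macaulayness of $I(X)$ from \cite{algcodes} and on $t_s$ not vanishing on $X$).
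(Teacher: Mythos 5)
Your proof is correct and follows essentially the same route as the paper: the paper's Artinian quotient $D=K[t_1,\ldots,t_{s-1}]/(\{t_i^{q-1}\}_{i=1}^{s-1})$ is exactly your $B=S/(I(\mathbb{T}),t_s)$, the surjection $B\twoheadrightarrow A$ is the paper's surjection onto $S/(I(X),t_s)$, and the identification of the regularity with the top nonzero degree of the Artinian reduction (via $t_s$ being a nonzerodivisor) is the same mechanism, as is the appeal to Corollary~\ref{ci-characterization} and Proposition~\ref{ci-summary} for the equality case.
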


\begin{proof} For $i\geq 0$, we set $h_i=\dim_K(S/(I(X),t_s))_i$. Let
$r$ be the index of regularity of $S/I(X)$. Then, $h_i>0$ 
for $i=0,\ldots,r$ and $h_i=0$ for $i>r$ (see
Section~\ref{prelim-invariants-of-I}). Since $t_s$ does not vanish at
any point of $X$, one has $(I(X)\colon t_s)=I(X)$. Therefore, there is an exact sequence
of graded $S$-modules
$$
0\longrightarrow(S/I(X))[-1]\stackrel{t_s}{\longrightarrow}S/I(X)\longrightarrow
S/(I(X),t_s)\longrightarrow 0,
$$
where $(S/I(X))[-1]$ is the $S$-module with the shifted graduation
such that
$$(S/I(X))[-1]_i=(S/I(X))_{i-1}$$
for all $i$. Therefore from the exact sequence above we get
\begin{eqnarray}
h_i&=&H_X(i)-H_X(i-1)\geq 0\label{jan14-10}
\end{eqnarray}
for $i\geq 1$. On the other hand there is a surjection of graded $S$-modules
$$
D=S/(\{t_i^{q-1}-t_s^{q-1}\}_{i=1}^{s-1}\cup\{t_s\})=
K[t_1,\ldots,t_{s-1}]/(\{t_i^{q-1}\}_{i=1}^{s-1})\longrightarrow
S/(I(X),t_s)\longrightarrow 0.
$$

The Hilbert series of $D$ is equal to the polynomial 
$(1+t+\cdots+t^{q-2})^{s-1}$ because $D$ is a complete intersection 
\cite[p.~104]{monalg}. 
Hence  $D_i=0$ for $i\geq (q-2)(s-1)+1$. From the surjection above we get
that $\dim_K D_i\geq h_i\geq 0$ for all $i$. If $i\geq (q-2)(s-1)+1$,
we obtain  $0=\dim_KD_i\geq h_i\geq 0$. Then, from
Eq.~(\ref{jan14-10}), 
we conclude 
$$
H_X(i)=H_X(i-1)\  \mbox{ for } i-1\geq(q-2)(s-1).
$$
Hence ${\rm reg}(S/I(X))\leq (q-2)(s-1)$. To complete the proof assume
that $I(X)$ is a complete intersection, then by
Corollary~\ref{ci-characterization} the ideal $I(X)$ is equal to 
$(t_1^{q-1}-t_s^{q-1},\ldots,t_{s-1}^{q-1}-t_s^{q-1})$. Consequently 
${\rm reg}(S/I(X))=(q-2)(s-1)$. 
\end{proof}


Let $X$ be an algebraic toric set parameterized by arbitrary 
monomials $y^{v_1},\ldots,y^{v_s}$. A good parameterized code should
have large $|X|$ and  
with $\dim_KC_X(d)/|X|$ and $\delta_d/|X|$ as large as possible. 
The following easy result gives an indication of where to look for
non-trivial parameterized codes. Only the codes $C_X(d)$ with $1\leq
d<{\rm reg}(S/I(X))$ are interesting. 
\begin{proposition}\label{request-referee} $\delta_d=1$ for
$d\geq{\rm reg}(S/I(X))$.  
\end{proposition}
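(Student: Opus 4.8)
The plan is to reduce the statement to the well-known fact that the index of regularity is exactly the threshold beyond which the Hilbert function of $S/I(X)$ stabilizes at its final value $|X|$, together with the elementary observation that a code which attains its maximum possible dimension must contain a vector of weight one. Concretely, for $d \geq {\rm reg}(S/I(X))$ we have $H_X(d) = \dim_K C_X(d) = |X|$, since by definition of the regularity index $H_X(d) = h_X(d) = \deg(S/I(X)) = |X|$ for all such $d$. Thus the evaluation map ${\rm ev}_d\colon S_d \to K^{|X|}$ is surjective, so $C_X(d) = K^{|X|}$.

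Once $C_X(d) = K^{|X|}$, the minimum distance is trivially $1$: the standard basis vector $e_i \in K^{|X|}$, which has exactly one non-zero entry, lies in $C_X(d)$, and no non-zero vector can have weight $0$, so $\delta_d = 1$. First I would spell out this last step either via the basis-vector argument just given, or equivalently by invoking the Singleton bound $\delta_d \leq |X| - \dim_K C_X(d) + 1 = |X| - |X| + 1 = 1$ from the introduction, which immediately forces $\delta_d = 1$ since $\delta_d \geq 1$ always. The Singleton-bound route is the cleaner one to write down and is already the argument used for the analogous step in the proof of Theorem~\ref{maria-vila-hiram-eliseo}, so I would use it for consistency.

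The only point requiring a word of justification is the equality $\dim_K C_X(d) = |X|$ for $d \geq {\rm reg}(S/I(X))$. This is exactly the content of the discussion in Section~\ref{prelim-invariants-of-I}: $\dim_K C_X(d) = H_X(d)$, the Hilbert polynomial $h_X(t)$ is the constant $|X| = \deg(S/I(X))$ because $S/I(X)$ has dimension $1$, and ${\rm reg}(S/I(X))$ is by definition the least $p$ with $H_X(d) = h_X(d)$ for all $d \geq p$. Hence there is genuinely nothing hard here — the proof is three lines — and I do not anticipate any real obstacle; the statement is essentially a bookkeeping corollary packaged for convenient reference, as its placement right after Proposition~\ref{main-regularity-ff} suggests. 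If one wanted to be fully self-contained one could also note that the bound is sharp in general (attained when $X$ is a projective torus, by Theorem~\ref{maria-vila-hiram-eliseo}), but that is not needed for the proof of the proposition itself.
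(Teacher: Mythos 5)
Your proposal is correct and follows the same route as the paper: for $d\geq{\rm reg}(S/I(X))$ one has $\dim_KC_X(d)=H_X(d)=|X|$, and the Singleton bound then forces $\delta_d=1$. The extra remarks (the surjectivity/basis-vector variant and the sharpness comment) are fine but not needed.
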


\begin{proof} Since $H_X(d)$ is equal to the dimension of $C_X(d)$ and
$H_X(d)=|X|$ for $d\geq{\rm reg}(S/I(X))$, by a direct application of
the Singleton  
bound we get that $\delta_d=1$ for $d\geq{\rm reg}(S/I(X))$.
\end{proof}

A well known general fact about parameterized linear codes is that the
dimension of $C_X(d)$ is strictly increasing, as a function of 
$d$, until it reaches a constant value. This behaviour was 
pointed out in \cite{duursma-renteria-tapia} (resp.
\cite{geramita-cayley-bacharach}) 
for finite (resp. infinite)
fields. The minimum distance of $C_X(d)$ has the opposite behaviour
as the following result shows. 

\begin{proposition}{\cite{algcodes,tohaneanu}}\label{minimum-distance-behaviour}
If $\delta_d>1$ 
$($resp. $\delta_d=1)$, then 
$\delta_d>\delta_{d+1}$ $($resp. $\delta_{d+1}=1)$.
\end{proposition}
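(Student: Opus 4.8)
The plan is to compare $\delta_d$ and $\delta_{d+1}$ by lifting a worst-case polynomial in degree $d$ to one in degree $d+1$ whose zero set strictly grows. Recall that $\delta_d=|X|-M_d$ where $M_d=\max\{|A_F|\colon F\in S_d,\ F\not\equiv 0\}$ and $A_F=\{[P]\in X\colon F(P)=0\}$; so the statement is equivalent to: if $M_d<|X|$ then $M_{d+1}>M_d$, and if $M_d=|X|$ then $M_{d+1}=|X|$ (note $M_d=|X|$ cannot literally happen since $F\not\equiv 0$, but $\delta_d=1$ means $M_d=|X|-1$, and we must show $\delta_{d+1}=1$, i.e. $M_{d+1}=|X|-1$ as well — the maximum being one less than the full count).

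First I would handle the case $\delta_d=1$. Here there is $F\in S_d$ vanishing on all but one point $[P]$ of $X$. Pick a homogeneous linear form $\ell\in S_1$ that does not vanish at $[P]$ (such a form exists: among $t_1,\dots,t_s$ at least one coordinate of $P$ is nonzero). Then $\ell F\in S_{d+1}$, $\ell F\not\equiv 0$ on $X$ since $(\ell F)(P)\ne 0$, and $A_{\ell F}\supseteq A_F$, so $|A_{\ell F}|\geq |X|-1$; since $\ell F\not\equiv 0$ we get $|A_{\ell F}|=|X|-1$, hence $M_{d+1}\ge |X|-1$, and as always $M_{d+1}\le |X|-1$. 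Therefore $\delta_{d+1}=1$.

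Next, the case $\delta_d>1$. Choose $F\in S_d$, $F\not\equiv 0$, with $|A_F|=M_d$, so $A_F\ne X$ and $X\setminus A_F\ne\emptyset$ has at least $\delta_d\ge 2$ points; pick $[P_1],[P_2]\in X\setminus A_F$ with $[P_1]\neq[P_2]$. Since these are distinct points of $\mathbb{P}^{s-1}$, there is a homogeneous linear form $\ell\in S_1$ with $\ell(P_1)=0$ and $\ell(P_2)\neq 0$. Set $G=\ell F\in S_{d+1}$. Then $G(P_2)=\ell(P_2)F(P_2)\neq 0$, so $G\not\equiv 0$ on $X$; and $A_G\supseteq A_F\cup\{[P_1]\}$ with $[P_1]\notin A_F$, so $|A_G|\ge M_d+1$. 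Hence $M_{d+1}\ge M_d+1$, i.e. $\delta_{d+1}=|X|-M_{d+1}\le |X|-M_d-1=\delta_d-1<\delta_d$, as desired. The only subtlety is the existence of a separating linear form for two distinct points of projective space together with the nonvanishing condition in the first case; this is an elementary linear-algebra fact (the point $[P]$ is not contained in every coordinate hyperplane, and two distinct points are separated by some hyperplane through exactly one of them), so this is the routine part rather than a genuine obstacle. I would cite \cite{algcodes,tohaneanu} for the statement and present this lifting argument as the proof.
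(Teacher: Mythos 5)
Your argument is correct. The paper itself gives no proof of this proposition --- it only cites \cite{algcodes} and \cite{tohaneanu} --- but your lifting argument (rewrite $\delta_d=|X|-M_d$ and multiply a maximizing $F\in S_d$ by a linear form that kills one additional point of $X\setminus A_F$ while surviving at another) is essentially the standard proof found in those references, and the separating-hyperplane step you flag is indeed routine since distinct points of $\mathbb{P}^{s-1}$ are linearly independent in $K^s$.
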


\medskip

\begin{center}
ACKNOWLEDGMENTS
\end{center}

The authors would like to thank two anonymous referees for providing us 
with useful comments and suggestions, and for pointing out that
Proposition~\ref{minimum-distance-behaviour} was first shown by S.
Toh\v{a}neanu in \cite[Proposition~2.1]{tohaneanu}. The authors would also like thank
Hiram L\'opez, 
who provided an alternative
proof of Theorem~\ref{maria-vila-hiram-eliseo}, and Carlos Renter\'\i a 
for many stimulating discussions.

\bibliographystyle{plain}

\end{document}